\newfont{\rams}{msbm10 scaled\magstep1}
\newcommand{\rea}{\mbox{\rams \symbol{'122}}}
\newenvironment{resumeT}{\begin{list}{}{\setlength{\rightmargin}{\leftmargin}}\item[]
{\centering {\bf \it~~~}
\par}\item[]\ignorespaces}{\unskip\end{list}}
\newtheorem{defn}{Definition}[section]
\newtheorem{prop}{Proposition}[section]
\newtheorem{note}{Note}[section]
\begin{document}
\title{Robust Ordinal Regression in case of Imprecise Evaluations}
\author{ \hspace{0,1cm} Salvatore Corrente\footnote{Department of Economics and Business, University of Catania, Corso Italia, 55, 95129  Catania, Italy, e-mail: \mbox{salvatore.corrente\string@unict.it}}, 
\hspace{0,1cm} Salvatore Greco\footnote{Department of Economics and Business, University of Catania, Corso Italia, 55, 95129  Catania, Italy, e-mail: salgreco\string@unict.it}, 
\hspace{0,1cm} Roman S{\l}owi\'{n}ski\footnote{Institute of Computing Science, Pozna\'{n} University of Technology, 60-965 Pozna\'{n}, and Systems Research Institute, Polish Academy of Sciences, 01-447 Warsaw, Poland, e-mail: roman.slowinski\string@cs.put.poznan.pl }} 

\date{}
\maketitle \vfill
\newpage


\addcontentsline{toc}{section}{Abstract}
\begin{center}
  {\Large
    \textbf{Robust Ordinal Regression in case of Imprecise Evaluations}
  }
\end{center}

\begin{center}
{\large {\bf Abstract}}
\end{center}

\vspace{-1cm}

\begin{resumeT}

Robust Ordinal Regression (ROR) is a way of dealing with Multiple Criteria Decision Aiding (MCDA), by considering all sets of parameters of an assumed preference model, that are compatible with preference information given by the Decision Maker (DM). As a result of ROR, one gets necessary and possible preference relations in the set of alternatives, which hold for all compatible sets of parameters or for at least one compatible set of parameters, respectively. In this paper, we extend the MCDA methods based on ROR, by considering one important aspect of decision problems: imprecise evaluations. To deal with imprecise evaluations of some alternatives on particular criteria, we extend the set of considered variables to define necessary and possible preference relations taking into account this imprecision. In consequence, the concepts of necessary and possible preference represent not only all compatible sets of parameters, but also all possible values of the imprecise evaluations of alternatives. 

\vspace{1cm}
{\bf Keywords}: {Robust Ordinal Regression, Preference relations, Imprecise Evaluation}
\end{resumeT}

\vfill\newpage
 \pagenumbering{arabic}

\section{Introduction}
Multiple criteria decision aiding methods face three different types of problems: ranking, sorting and choice. A ranking problem consists in rank ordering of all considered alternatives, from the worst to the best, taking into account their evaluations on the considered criteria; a sorting problem consists in assigning each alternative to one or several predefined and ordered classes; a choice problem consists in selecting a set of alternatives considered the best or discarding a set of bad alternatives from the whole set of alternatives (for a detailed survey see \cite{FigGreEhr}). In order to deal with these problems, two different methodologies can be used: 
\begin{itemize}
\item assigning to each alternative a value given by a utility function, i.e. a real number reflecting its degree of desirability,
\item comparing the alternatives with respect to their evaluations on the considered criteria. 
\end{itemize}
In the first case, the Multi Attribute Utility Theory (MAUT) \cite{Keeney76} is most frequently used; MAUT provides a methodology for building a value function assigning a real number to each alternative. In the second case, the most popular are outranking methods which build an outranking relation in the set of alternatives that compare them pairwise \cite{Roy96_ENG,Brans84}. In the context of MAUT, one often uses additive value functions, that is functions obtained by adding up as many marginal value functions as there are evaluation criteria. In order to use this kind of procedure, one needs to construct all the marginal value functions. The construction requires some preference information elicited by the Decision Maker (DM). An analyst can obtain them in one of two ways: asking the DM to provide it directly, or indirectly. As direct definition of marginal value functions requires too big cognitive effort from the part of the DM, indirect elicitation of preference information prevails in the literature (see, e.g.,\cite{jacquet1982assessing},\cite{srinivasan1973estimating},\cite{pekelman1974mathematical}). Indirect preference information is expressed by the DM in terms of decision examples, e.g., holistic pairwise comparisons of some reference alternatives. When looking for an additive value function which is compatible with decision examples provided by the DM, i.e. reproduces decisions made by the DM, one can find many such compatible value functions, and, in general, each of these functions can give a different answer to the decision problem at hand. For this reason, Robust Ordinal Regression (ROR) was proposed (see, e.g.,\cite{greco2008ordinal},\cite{figueira2009building},\cite{greco2010multiple},\cite{Greco10}), that takes into consideration all compatible value functions simultaneously. In the context of ROR, two preference relations are considered: 
\begin{itemize}
\item possible preference relation, for which alternative $a$ is possibly preferred to alternative $b$ if $a$ is at least as good as $b$ for at least one compatible value function, and 
\item necessary preference relation, for which alternative $a$ is necessarily preferred to alternative $b$ if $a$ is at least as good as $b$ for all compatible value functions. 
\end{itemize}
Observe that ROR has been also used in case of preference models expressed in terms of outranking methods \cite{GKMS2010},\cite{Promethee2010}, in case of non-additive integrals \cite{angilella2010non}, so as also in presence of criteria structured in a hierarchical way \cite{CGSHierarchy}. 

In this paper, we introduce one important issue more to ROR: imprecise evaluations of alternatives. In many real world problems, alternatives are imprecisely evaluated with respect to the considered criteria; this is due to several reasons: for example, inexact definition of criteria, uncertainty or imprecision of data used for calculation of performances of alternatives on particular criteria, or subjective assessment of the performances. Different types and natures of imprecise evaluations have been taken into account in the literature; in the following we provide few examples: \cite{park1997tools} considers a situation in which imprecise evaluations concern weights, utilities, and in some cases also probabilities of the performances of alternatives on particular criteria and the DM provides this information in form of inequalities; \cite{kim1997group} considers not only imprecise weights and utilities, but dealing with a group decision problem, it takes into account imprecise importance of the members of the group; \cite{ahn2000multi},\cite{lee2002dominance} take into account imprecision of weights and utilities in a hierarchic context; \cite{eum2001establishing} takes into account imprecision regarding weights and utilities; \cite{fishburn1965analysis} compares decision strategies taking into account different types of imprecise information on the probabilities of states of the world; \cite{weber1987decision} explains different ways of facing situations with imprecise information on probabilities, utilities and evaluations with respect to considered criteria.

In this paper, we assume that imprecision regards the evaluations of alternatives with respect to the considered criteria; this implies that performance of an alternative on a considered criterion is not a unique number but an interval of possible values. First, we suppose that each interval is characterized by its extreme values, that is, the worst and the best evaluations the alternative can assume with respect to the considered criterion. For example, we could say that evaluation of alternative $a$ with respect to criterion $g$ is represented by the interval $[g^{L}(a),g^{R}(a)]$, to indicate that $a$ could have whatever evaluation between $g^{L}(a)$ and $g^{R}(a)$. Then, we will consider the case where each interval is characterized not only by its extreme values but also by another point between them; in this way $g(a)=[g^{L}(a),g^{P}(a),g^{R}(a)]$, and this interval indicates that, with respect to criterion $g$, alternative $a$ can assume whatever evaluation between $g^{L}(a)$ and $g^{R}(a)$ but the most probable evaluation assumed by $a$ is $g^{P}(a).$ Generalizing again, in a way of \cite{ozturk2010representing}, we will consider intervals characterized by their extreme values and by a certain number of other points between them, such that for a given criterion $g$, we have $g(a)=\left[g^{1}(a),\ldots,g^{n}(a)\right]$, where $g^{i}, i=1,\ldots,n$, will be called \textit{indicators} of criterion $g$. This means that an alternative could have any evaluation from among the $n$ listed points of the interval.

Obviously, if alternative $a$ assumes the same value for all indicators of criterion $g_{j}$, that is $g_{j}^{1}(a)=\ldots=g_{j}^{n}(a)$, then alternative $a$ has a precise evaluation on criterion $g_{j}$ being $g_{j}^{1}(a)$. 

Proceeding in this way, if $m$ is the number of evaluation criteria, to each alternative $a$ corresponds a vector of $m\times n$ values, and thus, instead of aggregating $m$ evaluations corresponding to the $m$ considered criteria, we have to aggregate $m\times n$ evaluations specified by the above mentioned indicators.

In order to deal with $m\times n$ possible evaluations of alternative $a$, we are considering $n$ fictitious copies of this alternative, denoted by $a^{(i)}, i=1,\ldots,n,$ where $a^{(i)}$ is an alternative having precise evaluations on all considered criteria and, in particular, for each criterion $g_{j}$ this precise evaluation is $g_{j}^{i}(a)$. For example, if alternative $a$ had the following evaluations on criteria $g_{1},g_{2}$ and $g_{3}$:

$$
g_{1}(a)=\left[10,15,17\right], \;\;\;\;\;\;\; g_{2}(a)=\left[23,35,50\right], \;\;\;\;\;\;\;\mbox{and} \;\;\;\;\;\;\; g_{3}(a)=\left[1,40,89\right], 
$$
then, fictitious alternatives $a^{(1)},a^{(2)}$ and $a^{(3)}$ were evaluated as shown in Table \ref{fict_alter}:

\begin{table}[!h]
\caption{Evaluation table of fictitious alternatives $a^{(1)},$ $a^{(2)}$, $a^{(3)}$}
\label{fict_alter}
\begin{center}
\begin{tabular}{|cccc|}
\hline
\rule[-3mm]{-1mm}{0.8cm}
alternative $a^{(i)}$ & $g_{1}\left(a^{(i)}\right)$ & $g_{2}\left(a^{(i)}\right)$ & $g_{3}\left(a^{(i)}\right)$ \\
\hline
\rule[-3mm]{-1mm}{0.8cm}
$a^{(1)}$ & [10,10,10] & [23,23,23] & [1,1,1] \\
\rule[-3mm]{-1mm}{0.8cm}
$a^{(2)}$ & [15,15,15] & [35,35,35] & [40,40,40] \\
\rule[-3mm]{-1mm}{0.8cm}
$a^{(3)}$ & [17,17,17] & [50,50,50] & [89,89,89] \\
\hline
\end{tabular}
\end{center}
\end{table}

\noindent Obviously, $a^{(1)}$ will be the worst realization of alternative $a$ because it has the worst possible evaluations on all considered criteria, and $a^{(n)}$ is instead the best realization of alternative $a$ because it has the best possible evaluations on all considered criteria. 

In the ROR context, we shall consider all the value functions compatible with the preference information provided by the DM, obtaining $n^{2}+1$ necessary and $n^{2}+1$ possible preference relations defined as follows: \\
For any pair $a,b$ of considered alternatives, 
\begin{itemize}
\item the $(i,k)$ necessary preference relation, with $i,k=1,\ldots,n$, for which $a$ is  $(i,k)-$necessarily preferred to $b$ if $a^{(i)}$ is at least as good as $b^{(k)}$ for all compatible value functions,
\item the $(i,k)$ possible preference relation, with $i,k=1,\ldots,n$, for which $a$ is  $(i,k)-$possibly preferred to $b$ if $a^{(i)}$ is at least as good as $b^{(k)}$ for at least one compatible value function,
\item the necessary preference relation, for which $a$ is necessarily preferred to $b$ if $a$ is at least as good as $b$ for all compatible value functions considering all $n$ evaluations of $a$ and all $n$ evaluations of $b$,
\item the possible preference relation, for which $a$ is possibly preferred to $b$ if $a$ is at least as good as $b$ for at least one compatible value function considering all $n$ evaluations of $a$ and all $n$ evaluations of $b$.
\end{itemize}
The paper is structured in the following way: section 2 describes basic concepts of imprecise evaluations; in section 3 we present the application of ROR in case of imprecise evaluations; section 4 contains the properties of necessary and possible preference relations, also in case of group decisions; in section 5 are present a few extensions of imprecise evaluations; section 6 contains a didactic example, and conclusions gathered in section 7 end the paper.

\section{Imprecise Evaluations - description of the model}

We are considering a decision problem in which a finite set of alternatives, denoted by $A=\left\{a,b,c,\ldots\right\}$, can have imprecise evaluations with respect to $m$ evaluation criteria $\left\{g_{1},\ldots,g_m\right\}$. In case of precise evaluations, a criterion $g_{j}$, $j\in J=\left\{1,\ldots,m\right\}$, is a function $g_{j}:A\rightarrow X_{j}$ where $X_{j}$ is the set of all possible evaluations (quantitative or qualitative, depending on the evaluation scale of criterion $g_{j}$) that an alternative could have on criterion $g_{j}$. In case of $n$-point imprecise evaluations we assume that each criterion is a function $g_{j}:A\rightarrow{\cal I}_{j}$, where ${\cal I}_{j}=\left\{(x_{1},\ldots,x_{n}):x_{l}\leq_{j}x_{l+1},l=1,\ldots,n-1\right\}\subseteq {X_{j}}^{n}$, and $\leq_{j}$ coincides with mathematical operator $\leq$ if criterion $f_j$ has a quantitative scale, however, it is defined differently if criterion $f_{j}$ has a qualitative scale. To explain this different definition of $\leq_{j}$, let us consider the evaluation of a student regarding a certain subject denoted by $g_{j}$, and suppose that the evaluations (s)he can have with respect to this subject are: ``very bad'', ``bad'', ``medium'', ``good'' and ``very good''. Then, we need to define an ordering $\leq_{j}$ between any two of these evaluations that is obviously different from the inequality $\leq$ between two real numbers, that is ``very bad'' $\leq_{j}$ ``bad'' $\leq_{j}$ ``medium'' $\leq_{j}$ ``good'' $\leq_{j}$ ``very good''. Using this notation, for each $a\in A$ and for each criterion $g_{j}, j\in J$, we say that $g_{j}(a)=\left[g_{j}^{1}(a),\ldots,g_{j}^{n}(a)\right]$, to indicate that alternative $a\in A$ could assume on criterion $g_{j}$ any evaluation from among $g_{j}^{1}(a),g_{j}^{2}(a),\ldots,g_{j}^{n}(a).$ Each $g_{j}^{i}$, $j=1,\ldots,m$, $i=1,\ldots,n$, will be called indicator. In this context it is worth noting one important remark:
\begin{itemize}
\item if alternative $a\in A$ has a precise evaluation on criterion $g_j$, $j\in J$, then $g_{j}^{1}(a)=\ldots=g_{j}^{n}(a).$
\end{itemize}

\noindent Without loss of generality, we will suppose the following: 
\begin{itemize}
\item each criterion has a quantitative scale, therefore $X_{j}\subseteq\rea$, and $\geq_{j}$ coincides with $\geq$,
\item for each $j=1,\ldots,m$, the interval $g_{j}(a)$ representing the evaluation of $a\in A$ on criterion $g_{j}$ is characterized by $n$ points,
\item the greater $g_{j}^{i}(a), a\in A$, the better is alternative $a$ on indicator $g_{j}^{i}, j=1,\ldots,m$, $i=1,\ldots,n.$
\end{itemize}

Considering the imprecise evaluations and the $n$ indicators $g_{j}^{i}$ for each criterion $g_{j}$, each alternative $a$ will be represented in the following way:
$$ g(a)=\left(\left[g_{1}^{1}(a),\ldots,g_{1}^{n}(a)\right],\ldots,\left[g_{j}^{1}(a),\ldots,g_{j}^{n}(a)\right],\ldots,\left[g_{m}^{1}(a),\ldots,g_{m}^{n}(a)\right]\right).$$

Using this notation, we can write the following definitions:

\begin{defn}\label{dominance}
Given alternatives $a,b\in A$, and $i,k\in\left\{1,\ldots,n\right\}$, we say that ``$a$ $(i,k)$-dominates $b$'', denoted by $a\Delta^{(i,k)}b$, if $g_{j}^{i}(a)\geq g_{j}^{k}(a),$ $\forall j=1,\ldots,m$. 
\end{defn}

\begin{defn}\label{Normal_dominance}
Given alternatives $a,b\in A,$ we say that ``$a$ normally dominates $b$'', denoted by $a\Delta b$, if $g_{j}^{i}(a)\geq g_{j}^{i}(b)$, $\forall j=1,\ldots,m$, and $\forall i=1,\ldots,n.$

\vspace{0,2cm}
\noindent Equivalently we can say that ``$a$ normally dominates $b$'' if $a$ $(i,i)$-dominates $b$, $\forall i=1,\ldots,n.$ 
\end{defn}

\noindent The following proposition gives some basic properties of dominance relations.

\begin{prop}\label{Dominance Proposition}
\hspace{0,1cm}
\begin{enumerate}
\item If $i\geq k,$ $i,k\in\left\{1,\ldots,n\right\}$, then $\Delta^{(i,k)}$ is reflexive, \label{dom_reflexivity}
\item If $i\leq k,$ $i,k\in\left\{1,\ldots,n\right\}$, then $\Delta^{(i,k)}$ is transitive, \label{dom_transitivity}
\item For each $i\in\left\{1,\ldots,n\right\},$ $\Delta^{(i,i)}$ is a partial preorder, \label{part_preorder}
\item If $r\geq i$ and $s\leq k,$ $i,k,r,s\in\left\{1,\ldots,n\right\}$, then $\Delta^{(i,k)}\;\subseteq\;\Delta^{(r,s)}$, \label{Dom_incl}
\item Given alternatives $a,b,c\in A$, if $a\Delta^{(i,k)}b,$ $b\Delta^{(i_1,k_1)}c,$ and $k\geq i_1$, $i,k,i_1,k_1\in\left\{1,\ldots,n\right\}$, then $a\Delta^{(r,s)}c$, $r,s\in\left\{1,\ldots,n\right\}:$ $r\geq i$ and $s\leq k_1,$ \label{dom_mixed_transitivity}
\item $\Delta$ is a partial preorder, \label{dom_preord}
\item Given alternatives $a,b,c\in A$, if $a\Delta^{(i,k)}b,$ $b\Delta c,$ $i,k\in\left\{1,\ldots,n\right\}$, then $a\Delta^{(s,t)}c$ with $s,t\in\left\{1,\ldots,n\right\}$ such that $s\geq i$ and $t\leq k$, \label{domind_dom_transitivity}
\item Given alternatives $a,b,c\in A$, if $a\Delta b,$ $b\Delta^{(i,k)} c,$ $i,k\in\left\{1,\ldots,n\right\}$, then $a\Delta^{(s,t)}c$ with $s,t\in\left\{1,\ldots,n\right\}$ such that $s\geq i$ and $t\leq k$. \label{dom_domind_transitivity}
\end{enumerate}
\end{prop}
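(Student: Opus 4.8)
The plan is to unwind each item directly from Definition~\ref{dominance} and Definition~\ref{Normal_dominance}, since every statement reduces to elementary manipulations of the inequalities $g_j^i(a)\ge g_j^k(b)$ together with the monotonicity $g_j^1(a)\le\cdots\le g_j^n(a)$ built into each interval. I would treat the items in an order that lets later ones reuse earlier ones: first the two monotonicity-in-indices facts (\ref{dom_reflexivity}) and (\ref{Dom_incl}), then the transitivity claims, then the ``partial preorder'' statements as immediate corollaries, and finally the three mixed transitivity statements (\ref{dom_mixed_transitivity}), (\ref{domind_dom_transitivity}), (\ref{dom_domind_transitivity}) which are all special cases or slight variants of one master argument.

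For (\ref{dom_reflexivity}): $a\Delta^{(i,k)}a$ means $g_j^i(a)\ge g_j^k(a)$ for all $j$; since $i\ge k$ and the indicators are non-decreasing in the upper index, this holds, so $\Delta^{(i,k)}$ is reflexive. For (\ref{Dom_incl}): if $a\Delta^{(i,k)}b$ then $g_j^i(a)\ge g_j^k(b)$ for all $j$; from $r\ge i$ we get $g_j^r(a)\ge g_j^i(a)$ and from $s\le k$ we get $g_j^k(b)\ge g_j^s(b)$, hence $g_j^r(a)\ge g_j^s(b)$, i.e. $a\Delta^{(r,s)}b$. For (\ref{dom_transitivity}): suppose $i\le k$, $a\Delta^{(i,k)}b$ and $b\Delta^{(i,k)}c$; then $g_j^i(a)\ge g_j^k(b)$ and $g_j^i(b)\ge g_j^k(c)$; since $i\le k$, $g_j^k(b)\ge g_j^i(b)$, so chaining gives $g_j^i(a)\ge g_j^k(c)$, i.e. $a\Delta^{(i,k)}c$. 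Item (\ref{part_preorder}) is then immediate: $\Delta^{(i,i)}$ falls under both (\ref{dom_reflexivity}) (as $i\ge i$) and (\ref{dom_transitivity}) (as $i\le i$), so it is a reflexive transitive relation, i.e. a partial preorder. Item (\ref{dom_preord}) follows because, by the equivalent formulation in Definition~\ref{Normal_dominance}, $\Delta=\bigcap_{i=1}^n\Delta^{(i,i)}$ is an intersection of partial preorders, hence itself reflexive and transitive.

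For the mixed transitivity (\ref{dom_mixed_transitivity}): from $a\Delta^{(i,k)}b$ we have $g_j^i(a)\ge g_j^k(b)$, from $b\Delta^{(i_1,k_1)}c$ we have $g_j^{i_1}(b)\ge g_j^{k_1}(c)$, and the hypothesis $k\ge i_1$ gives $g_j^k(b)\ge g_j^{i_1}(b)$; chaining the three inequalities yields $g_j^i(a)\ge g_j^{k_1}(c)$ for all $j$, that is $a\Delta^{(i,k_1)}c$, and then (\ref{Dom_incl}) extends this to any $r\ge i$, $s\le k_1$. Statements (\ref{domind_dom_transitivity}) and (\ref{dom_domind_transitivity}) are the special cases of this argument obtained by writing $a\Delta b$, respectively $b\Delta c$, via one of its $(i_1,i_1)$-component relations and choosing the index so that the hypothesis of (\ref{dom_mixed_transitivity}) is met (e.g. for (\ref{domind_dom_transitivity}), use that $b\Delta c$ implies $b\Delta^{(k,k)}c$, whose first index $k$ satisfies $k\ge k$, so that $a\Delta^{(i,k)}c$ and then $a\Delta^{(s,t)}c$ for $s\ge i$, $t\le k$); the other is symmetric.

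I do not anticipate a genuine obstacle here — every step is a one-line inequality chase — so the only real care needed is bookkeeping: keeping straight which index is an upper superscript on which alternative, and making sure the direction of the monotonicity inequality $g_j^{p}(\cdot)\ge g_j^{q}(\cdot)$ for $p\ge q$ is applied to the correct alternative at each link of the chain. The mildly delicate point is that in (\ref{dom_mixed_transitivity})--(\ref{dom_domind_transitivity}) one must first establish the ``tightest'' conclusion ($a\Delta^{(i,k_1)}c$, etc.) and only then invoke the inclusion property (\ref{Dom_incl}) to obtain the stated family of $\Delta^{(r,s)}$ relations, rather than trying to prove the general $(r,s)$ case from scratch.
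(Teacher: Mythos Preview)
Your proposal is correct and follows essentially the same approach as the paper: each item is reduced to a direct chain of inequalities $g_j^{p}(\cdot)\ge g_j^{q}(\cdot)$ using the monotonicity of indicators, with (\ref{part_preorder}) and (\ref{dom_preord}) derived as corollaries and (\ref{Dom_incl}) used to pass from the tight conclusion $a\Delta^{(i,k_1)}c$ to the full $(r,s)$ family in (\ref{dom_mixed_transitivity}). The only cosmetic difference is that the paper proves (\ref{domind_dom_transitivity}) and (\ref{dom_domind_transitivity}) by writing out the inequality chain directly, whereas you more economically reduce them to (\ref{dom_mixed_transitivity}) via $b\Delta c\Rightarrow b\Delta^{(k,k)}c$ (respectively $a\Delta b\Rightarrow a\Delta^{(i,i)}b$); both are valid.
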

\begin{proof}
See Appendix.
\end{proof}

\begin{note}
In the following, we shall call strong dominance, and we shall denote it by $\Delta^{S}$, the dominance relation $\Delta^{(1,n)}$. Similarly, we shall call weak dominance, and we shall denote it by $\Delta^{W}$, the dominance relation $\Delta^{(n,1)}$. When $n=2$, that is, when each interval is characterized by its worst and its best values only, then we only have the strong dominance relation, the normal dominance relation and the weak dominance relation. In this case, we can compare two alternatives considering only their best values and their worst values, or intervals of values provided by them. Using Proposition \ref{Dominance Proposition} we can state that weak and normal dominances are reflexive relations, normal and strong dominances are transitive relations, and so on.
\end{note}

Considering simultaneously the strong and the weak dominance relations we can state the following proposition:
\begin{prop}\label{salvo_prop}
\hspace{0,2cm}
\begin{enumerate}
\item $\Delta^{(1,n)} \;\subseteq\; \Delta \;\subseteq \; \Delta^{(n,1)},$ \label{dom_chain}
\item For $i,k=1,\ldots,n$, $\Delta^{(1,n)} \;\subseteq\; \Delta^{(i,k)} \;\subseteq \; \Delta^{(n,1)}.$ \label{dom_ind_chain}
\end{enumerate}
\end{prop}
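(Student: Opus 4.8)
The plan is to obtain everything from Proposition \ref{Dominance Proposition}, in particular from the monotonicity-type inclusion \ref{Dom_incl}, together with the equivalent characterization of normal dominance in Definition \ref{Normal_dominance} ($a\Delta b$ iff $a\Delta^{(i,i)}b$ for all $i$). Beyond invoking the ordering $g_{j}^{1}(\cdot)\leq\cdots\leq g_{j}^{n}(\cdot)$ of the indicators, no computation is needed.

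I would first prove part \ref{dom_ind_chain}, since it is the cleanest and part \ref{dom_chain} reuses it. Fix $i,k\in\{1,\ldots,n\}$. For the left inclusion $\Delta^{(1,n)}\subseteq\Delta^{(i,k)}$, apply item \ref{Dom_incl} of Proposition \ref{Dominance Proposition} with the pair $(1,n)$ in the role of $(i,k)$ there and $(i,k)$ in the role of $(r,s)$: the hypotheses ``$r\geq i$ and $s\leq k$'' become $i\geq 1$ and $k\leq n$, which hold trivially. For the right inclusion $\Delta^{(i,k)}\subseteq\Delta^{(n,1)}$, apply the same item with $(i,k)$ as $(i,k)$ and $(n,1)$ as $(r,s)$: the hypotheses become $n\geq i$ and $1\leq k$, again trivially true. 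Chaining the two yields part \ref{dom_ind_chain}.

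For part \ref{dom_chain}, I would specialize part \ref{dom_ind_chain} to $i=k$ and combine it with $\Delta=\bigcap_{i=1}^{n}\Delta^{(i,i)}$ from Definition \ref{Normal_dominance}. Indeed $\Delta^{(1,n)}\subseteq\Delta^{(i,i)}$ for every $i$ (left inclusion of part \ref{dom_ind_chain} with $k=i$), hence $\Delta^{(1,n)}\subseteq\bigcap_{i=1}^{n}\Delta^{(i,i)}=\Delta$; and $\Delta=\bigcap_{i=1}^{n}\Delta^{(i,i)}\subseteq\Delta^{(1,1)}\subseteq\Delta^{(n,1)}$, the last step being the right inclusion of part \ref{dom_ind_chain} with $i=k=1$. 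Alternatively one argues directly from monotonicity of the indicators: if $a\Delta^{(1,n)}b$ then $g_{j}^{i}(a)\geq g_{j}^{1}(a)\geq g_{j}^{n}(b)\geq g_{j}^{i}(b)$ for all $i,j$, so $a\Delta b$; and if $a\Delta b$ then $g_{j}^{n}(a)\geq g_{j}^{1}(a)\geq g_{j}^{1}(b)$ for all $j$, so $a\Delta^{(n,1)}b$.

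There is no genuine obstacle: the whole statement collapses to the trivial inequalities $1\leq i$, $k\leq n$, $i\leq n$, $1\leq k$ and to the monotonicity already built into the definition of ${\cal I}_{j}$. The one place to be careful is the bookkeeping when invoking item \ref{Dom_incl}, since it is phrased as $\Delta^{(i,k)}\subseteq\Delta^{(r,s)}$ under $r\geq i$, $s\leq k$, and one must match the indices in the right order rather than swapping the two roles.
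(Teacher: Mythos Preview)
Your proposal is correct and follows essentially the same approach as the paper: both arguments rest entirely on item~\ref{Dom_incl} of Proposition~\ref{Dominance Proposition} together with the identification $\Delta=\bigcap_{i=1}^{n}\Delta^{(i,i)}$ from Definition~\ref{Normal_dominance}. The only cosmetic difference is that you prove part~\ref{dom_ind_chain} first and then specialize it to obtain part~\ref{dom_chain}, whereas the paper treats the two parts in the stated order; for the right inclusion of part~\ref{dom_chain} the paper uses $\Delta^{(i,i)}\subseteq\Delta^{(n,1)}$ for every $i$ rather than your route via $\Delta\subseteq\Delta^{(1,1)}\subseteq\Delta^{(n,1)}$, but this is the same idea.
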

\begin{proof}
See Appendix.
\end{proof}

\begin{note}
\noindent Proposition \ref{salvo_prop} shows how important it is to take into account the weak and the strong dominance relations, as they are the only two relations that can be compared directly with $\Delta$ because, in general, for any $(i,k)\in\left\{1,\ldots,n\right\}\times\left\{1,\ldots,n\right\}$, with $(i,k)\neq(1,n)$ and $(i,k)\neq(n,1)$, we can have $\Delta^{(i,k)}\not\subseteq\Delta$ and $\Delta\not\subseteq\Delta^{(i,k)}$. 
\end{note}

\section{Robust Ordinal Regression for Imprecise Evaluations}\label{ROR}

\noindent Considering only the dominance relations, the information stemming for the formulation of a multiple criteria decision problem is very poor. For this reason, in order to deal with one of the three classic decision problems (choice, ranking and sorting), we are using the Multi-attribute Utility Theory (MAUT) \cite{Keeney76}. MAUT considers value functions
$$
U(g_1(a), \ldots, g_m(a)) \colon \rea^m \rightarrow \rea
$$
such that: $a \,\,\text{is at least as good as}\,\, b \ 
\Leftrightarrow \ U(g_1(a), \ldots, g_m(a)) \ge U(g_1(b), \ldots, g_m(b))$, taking into account the evaluations of alternatives with respect to the considered criteria. In case of imprecise evaluations, we consider for each criterion $g_{j}, j\in J,$ $n$ indicators $g_{j}^{i}:A\rightarrow X_j$, $i=1,\ldots,n$, assigning to each alternative $a\in A$ the $i$-th evaluation from interval $g_{j}(a)$. Using this notation we can distinguish different types of value functions:

\begin{itemize}
\item $i$-th \textit{sub-marginal value function} with respect to the $i$-th indicator of criterion $g_{j}$, $u_{j,i}(g_{j}^{i}(a)):X_{j}\rightarrow\rea$, for all $j\in J$, and $i=1,\ldots,n,$ 
\item \textit{marginal value function} with respect to criterion $g_{j}$, $U_j\left(\left[g_{j}^{1}(a),\ldots,g_{j}^{n}(a)\right]\right):{\cal I}_{j}\rightarrow\rea$ such that $$U_{j}\left(\left[g_{j}^{1}(a),\ldots,g_{j}^{n}(a)\right]\right)=u_{j,1}\left(g_{j}^{1}(a)\right)+\ldots+u_{j,n}\left(g_{j}^{n}(a)\right),$$
\noindent (we are considering the marginal utility of alternative $a$ with respect to criterion $g_{j}$ dependent on all the $n$ considered indicators $g_{j}^{i}$ because each one of them gives a contribution to the evaluation of $a$ on criterion $g_{j}$ and this is also the reason for which we consider for each indicator $g_{j}^{i}$ a different sub-marginal value function $u_{j,i}$),
\item \textit{total additive value function}
$$U\left(\left[g_{1}^{1}(a)\ldots,g_{1}^{n}(a)\right],\ldots,\left[g_{m}^{1}(a),\ldots,g_{m}^{n}(a)\right]\right):{\cal I}_{1}\times\cdots\times{\cal I}_{m}\rightarrow\rea
$$ such that
$$
U\left(\left[g_{1}^{1}(a)\ldots,g_{1}^{n}(a)\right],\ldots,\left[g_{m}^{1}(a),\ldots,g_{m}^{n}(a)\right]\right)=
$$
$$
=\sum_{j=1}^{m}U_{j}\left(\left[g_{j}^{1}(a),\ldots,g_{j}^{n}(a)\right]\right)=\sum_{j=1}^{m}\left[\sum_{i=1}^{n}u_{j,i}\left(g_{j}^{i}(a)\right)\right]
$$
\end{itemize}
\noindent In the following, for the sake of simplicity, for each $j\in J$ we write $U_{j}(a)$ instead of  $U_j\left(\left[g_{j}^{1}(a),\ldots,g_{j}^{n}(a)\right]\right)$, and $U(a)$ instead of $U\left(\left[g_{1}^{1}(a),\ldots,g_{1}^{n}(a)\right],\ldots,\left[g_{m}^{1}(a),\ldots,g_{m}^{n}(a)\right]\right)$. In order to assign to each alternative a real number representing its degree of desirability, we need to know the sub-marginal value functions $u_{j,i}(\cdot),$ for all $j\in J$ and for all $i\in\left\{1,\ldots,n\right\}$. They can be obtained in two different ways: asking the decision maker (DM) what is the analytical expression of functions $u_{j,i}$, or computing them from indirect preference information elicited by the DM on a set $A^{R}\subseteq A$ of alternatives called \textit{reference alternatives}. The reference alternatives will be marked with a dash, like $\overline{a}$. We propose to use the second method, and thus the DM is asked to provide the following preference information:

\begin{itemize}
\item partial preorder $\succsim$ on $A^R$, whose meaning is: for $\overline{a},\overline{b}\in A^R$

\[
 \overline{a} \succsim \overline{b} \; \Leftrightarrow  \;  `` \overline{a} \;  \mbox{is at least as good as} \; \overline{b} \;\mbox{''};
\]
Taking into account $\succsim$, we have that $\succsim^{-1}$ denote the inverse of $\succsim$, i.e. if $\overline{a}\succsim \overline{b}$ then $\overline{b}\succsim^{-1}\overline{a};$ $\sim$ (indifference) is the symmetric part of $\succsim$  given by $\succsim\cap\succsim^{-1}$, i.e. if $\overline{a}\sim \overline{b}$ then $\overline{a}\succsim \overline{b}$ and $\overline{a}\succsim^{-1}\overline{b}$; $\succ$ (preference) is the asymmetric part of $\succsim$ given by $\succsim\setminus\sim$, i.e. if $\overline{a}\succ \overline{b}$ then $\overline{a}\succsim \overline{b}$ and not $\overline{a}\sim \overline{b}$;

\item partial preorder $\succsim^{\ast}$ on $A^R \times A^R$, whose meaning is: for $\overline{a},\overline{b},\overline{c},\overline{d} \in A^R$,

\[
(\overline{a},\overline{b}) \succsim^{\ast} (\overline{c}, \overline{d})  \Leftrightarrow   \;  \mbox{``$\overline{a}$ is preferred to $\overline{b}$ at least as much as $\overline{c}$ is preferred to $\overline{d}$\;''};
\]
analogously to $\succsim$, $\succ^{\ast}$ and $\sim^{\ast}$ are the asymmetric and the symmetric part of $\succsim^{\ast}$;

\item partial preorder $\succsim_{j}$ on $A^{R}$, whose meaning is: for $\overline{a},\overline{b} \in A^{R}$,
$$ \overline{a}\succsim_{j}\overline{b} \Leftrightarrow \;\;\mbox{``$\overline{a}$ is at least as good as $\overline{b}$ on criterion $g_{j}$''};$$ 
analogously to $\succsim$, $\succ_j$ and $\sim_j$ are the asymmetric and the symmetric part of $\succsim_j$;

\item partial preorder $\succsim_{j}^{\ast}$ on $A^{R} \times A^{R}$, whose meaning is: for $\overline{a},\overline{b},\overline{c},\overline{d} \in A^{R},$
$$(\overline{a},\overline{b})\succsim_{j}^{\ast} (\overline{c},\overline{d}) \Leftrightarrow \;\;\mbox{``$\overline{a}$ is preferred to $\overline{b}$ at least as much as $\overline{c}$ is preferred to $\overline{d}$}$$
$$ \mbox{ on criterion $g_{j}$''};$$
analogously to $\succsim$, $\succ_{j}^{\ast}$ and $\sim_{j}^{\ast}$ are the asymmetric and the symmetric part of $\succsim_{j}^{\ast}$.
\end{itemize}

In order to take into account the imprecise nature of evaluations, we consider for each alternative $a\in A$, $n$ fictitious alternatives $a^{(i)}$, having precise evaluations on all criteria, equal to the $i$-th point of interval $g_{j}(a)$, for each $j\in J$, i.e. $g_{j}^1\left(a^{(i)}\right)=\ldots=g_{j}^n\left(a^{(i)}\right)=g_{j}^i(a)$, for each $j\in J$. Note that given $a\in A$, a value function $U$ assigns to corresponding alternatives $a^{(i)}$ the value:

\begin{equation}\label{i-th alternative}
U\left(a^{(i)}\right)=u_{1,1}\left(g_{1}^{i}(a)\right)+\ldots+u_{1,n}\left(g_{1}^{i}(a)\right)+\ldots+u_{m,1}\left(g_{m}^{i}(a)\right)+\ldots+u_{m,n}\left(g_{m}^{i}(a)\right).
\end{equation}


An additive value function is called \textit{compatible} if it is able to restore the preference information supplied by the DM. Formally, a general additive compatible value function is an additive value function satisfying the following set of constraints:

\begin{equation*}
  \left .
       \begin{array}{l}
         \left .
         \begin{array}{l}

             U(\overline{a}) > U(\overline{b}) \;\;\; \text{if} \;\; \overline{a} \succ \overline{b}\\
             U(\overline{a}) = U(\overline{b}) \;\;\; \text{if} \;\; \overline{a} \sim \overline{b}\\
             U(\overline{a}) - U(\overline{b}) > U(\overline{c}) - U(\overline{d}) \;\;\; \text{if} \;\; (\overline{a},\overline{b}) \succ^{\ast} (\overline{c}, \overline{d}) \\
             U(\overline{a}) - U(\overline{b}) = U(\overline{c}) - U(\overline{d}) \;\;\; \text{if} \;\; (\overline{a},\overline{b}) \sim^{\ast} (\overline{c}, \overline{d}) \\ 
             U_{j}(\overline{a}) > U_{j}(\overline{b}) \;\;\; \text{if} \;\; \overline{a} \succ_{j} \overline{b}, \; j\in J \\
             U_{j}(\overline{a}) = U_{j}(\overline{b}) \;\;\; \text{if} \;\;  \overline{a} \sim_{j} \overline{b}, \; j\in J \\             
             U_{j}(\overline{a})-U_{j}(\overline{b})>U_{j}(\overline{c})-U_{j}(\overline{d}) \;\;\; \text{if} \;\; (\overline{a},\overline{b}) \succ^{\ast}_{j} (\overline{c},\overline{d}), \; j\in J\\
             U_{j}(\overline{a})-U_{j}(\overline{b})=U_{j}(\overline{c})-U_{j}(\overline{d}) \;\;\; \text{if} \;\; (\overline{a},\overline{b}) \sim^{\ast}_{j} (\overline{c},\overline{d}),  \; j\in J\\

         \end{array} \right \} \; \overline{a},\overline{b},\overline{c},\overline{d} \in A^{R}\\

          \;u_{j,i}(x_{j}^{k})-u_{j,i}(x_{j}^{k-1})\geq 0, \; \mbox{for each}\; j\in J,\;k=1, ..., m_{j}(A), \;i=1,\ldots,n\\
          \;u_{j,i}(x_{j}^{1})=0, \;\; \forall j\in J,i=1,\ldots,n\\
          \;\displaystyle\sum_{\substack{j\in J \\
i=1,\ldots,n}}u_{j,i}\left(x_{j}^{m_{j}(A)}\right)=1. \; \\
                \end{array}
       \right \} \; \left(E^{A^R}\right)
\end{equation*}

\noindent where, for each $j\in J$, $\displaystyle x_{j}^{1}=\min_{a\in A} g_{j}^{1}(a),$ $x_{j}^{m_{j}(A)}=\displaystyle\max_{a\in A} g_{j}^{n}(a)$, 
$x_{j}^{k}\in X_{j}({A}), k=1, ...,m_{j}(A),$ with $X_{j}(A)=\underset{\substack{a\in A \\ i=1,\ldots,n}}{\bigcup}g_{j}^{i}(a)\subseteq X_{j},$ being the set of all different evaluations of alternatives from $A$ on criterion $g_{j}, j\in J$, and $m_{j}(A)=\left|X_{j}(A)\right|.$ The values $x_{j}^{k}, k=1, ...,m_{j}(A),$ are increasingly ordered, i.e., $$x_{j}^{1}<x_{j}^{2}< ... <x_{j}^{m_{j}(A)-1}<x_{j}^{m_{j}(A)}.$$

In order to check the existence of a compatible value function, we have to weaken the strong inequalities by adding an auxiliary variable $\varepsilon,$ and then solve the following optimization problem, where the variables are $u_{j,i}(x_{j}^{k})$, $j\in J$, $i\in\left\{1,\ldots,n\right\}$, $k=1,\ldots,m_{j}(A)$ and $\varepsilon:$

\begin{equation*}
\begin{array}{l}
\;\;\;\mbox{Maximize} \;\;\varepsilon, \;\;\;\mbox{subject to constraints:}\\
\left.
\begin{array}{l}  
\left.
\begin{array}{l}
U(\overline{a}) \geq U(\overline{b})+\varepsilon \;\;\; \text{if} \;\; \overline{a} \succ \overline{b}\\
             U(\overline{a}) = U(\overline{b}) \;\;\; \text{if} \;\; \overline{a} \sim \overline{b}\\
             U(\overline{a}) - U(\overline{b}) \geq U(\overline{c}) - U(\overline{d}) +\varepsilon \;\;\; \text{if} \;\; (\overline{a},\overline{b}) \succ^{\ast} (\overline{c}, \overline{d}) \\
             U(\overline{a}) - U(\overline{b}) = U(\overline{c}) - U(\overline{d}) \;\;\; \text{if} \;\; (\overline{a},\overline{b}) \sim^{\ast} (\overline{c}, \overline{d}) \\ 
             U_{j}(\overline{a}) \geq U_{j}(\overline{b})+\varepsilon \;\;\; \text{if} \;\; \overline{a} \succ_{j} \overline{b}, \; j\in J \\
             U_{j}(\overline{a}) = U_{j}(\overline{b}) \;\;\; \text{if} \;\;  \overline{a} \sim_{j} \overline{b}, \; j\in J \\             
             U_{j}(\overline{a})-U_{j}(\overline{b})\geq U_{j}(\overline{c})-U_{j}(\overline{d})+\varepsilon \;\;\; \text{if} \;\; (\overline{a},\overline{b}) \succ^{\ast}_{j} (\overline{c},\overline{d}), \; j\in J\\
             U_{j}(\overline{a})-U_{j}(\overline{b})=U_{j}(\overline{c})-U_{j}(\overline{d}) \;\;\; \text{if} \;\; (\overline{a},\overline{b}) \sim^{\ast}_{j} (\overline{c},\overline{d}),  \; j\in J\\\end{array}
\right\}
\overline{a},\overline{b},\overline{c},\overline{d} \in A^{R}\\
\;\;u_{j,i}(x_{j}^{k})-u_{j,i}(x_{j}^{k-1})\geq 0, \;\mbox{for each}\; j\in J, \;k=1, ..., m_{j}(A), i=1,\ldots,n\\
\;\;u_{j,i}(x_{j}^{1})=0, \;\; \forall j\in J,i=1,\ldots,n\\
\;\;\displaystyle\sum_{\substack{j\in J \\
i=1,\ldots,n}}u_{j,i}\left(x_{j}^{m_{j}(A)}\right)=1. \; \\
\end{array}
\right\}\left(E^{A^{R'}}\right)
\end{array}
\end{equation*}

If $\varepsilon\left(E^{A^{R'}}\right)> 0$, where $\varepsilon\left(E^{A^{R'}}\right)=\max \varepsilon$,\; s.t. constraints $\left(E^{A^{R'}}\right)$,  then there exists at least one compatible value function $U(\cdot)$; if instead, $\varepsilon\left(E^{A^{R'}}\right) \leq 0$, then there does not exist any compatible value function $U(\cdot)$. Supposing that there exist more than one compatible value function, we indicate by ${\cal U}$ the set of all compatible value functions; in general, each of these functions will induce a different ranking on the set $A$ of alternatives. For this reason, Robust Ordinal Regression methods (see \cite{greco2008ordinal},\cite{figueira2009building},\cite{greco2010multiple},\cite{GKMS2010},\cite{Promethee2010},\cite{angilella2010non}), do not take into account only one compatible value function but the whole set of compatible value functions simultaneously. 

Taking into account the imprecise nature of evaluations, and considering for each alternative $a\in A$ the fictitious alternatives $a^{(i)}$, $i=1,\ldots,n$, we can define the following preference relations:

\begin{defn}\label{pos_pref}
Given two alternatives $a,b\in A$ and the set ${\cal U}$ of compatible value functions on $A^{R}\subseteq A,$ we say that $a$ is possibly preferred to $b$, if $a$ is at least as good as $b$ for at least one compatible value function:

$$a\succsim^{P}b \Leftrightarrow \;\mbox{there exists}\; U\in{\cal U}: U(a)\geq U(b).$$

\end{defn}

\begin{defn}\label{nec_pref}
Given two alternatives $a,b\in A$ and the set ${\cal U}$ of compatible value functions on $A^{R}\subseteq A,$ we say that $a$ is necessarily preferred to $b$, if $a$ is at least as good as $b$ for all compatible value functions:

$$ a\succsim^{N}b \Leftrightarrow U(a)\geq U(b), \;\mbox{for all}\; U\in{\cal U}.$$

\end{defn}

\begin{defn}
Given two alternatives $a,b\in A$, the set ${\cal U}$ of compatible value functions on $A^{R}\subseteq A,$ and $i,k\in\left\{1,\ldots,n\right\}$, we say that $a$ is $(i,k)$-possibly preferred to $b$, if $a^{(i)}$ is at least as good as $b^{(k)}$ for at least one compatible value function:

$$a\succsim_{(i,k)}^{P}b \Leftrightarrow \;\mbox{there exists}\; U\in{\cal U}: U\left(a^{(i)}\right)\geq U\left(b^{(k)}\right).$$

\end{defn}

\begin{defn}
Given two alternatives $a,b\in A$, the set ${\cal U}$ of compatible value functions on $A^{R}\subseteq A,$ and $i,k\in\left\{1,\ldots,n\right\}$, we say that $a$ is $(i,k)-$necessarily preferred to $b$, if $a^{(i)}$ is at least as good as $b^{(k)}$ for all compatible value functions:

$$ a\succsim_{(i,k)}^{N}b \Leftrightarrow U\left(a^{(i)}\right)\geq U\left(b^{(k)}\right), \;\mbox{for all}\; U\in{\cal U}.$$

\end{defn}

\noindent For all $a,b\in A$, and for all $i,k\in\left\{1,\ldots,n\right\}$, let us consider the following sets of constraints:

\begin{equation*}
\left.
\begin{array}{l}
U(a)\geq U(b)\\
E^{A^{R'}}
\end{array}
\right\}\left(E^{P}(a,b)\right), \;\;\;\;\;\;\;\;\;\;\;\;\;\;\;
\left.
\begin{array}{l}
U(b)\geq U(a)+\varepsilon\\
E^{A^{R'}}
\end{array}
\right\}\left(E^{N}(a,b)\right), 
\end{equation*}

\vspace{0,4cm}
\begin{equation*}
\left.
\begin{array}{l}
U(a^{(i)})\geq U(b^{(k)})\\
E^{A^{R'}}
\end{array}
\right\}\left(E_{(i,k)}^{P}(a,b)\right), \;\;\;\;\;\;\;\;\;\;\;\;\;\;\;
\left.
\begin{array}{l}
U(b^{(k)})\geq U(a^{(i)})+\varepsilon\\
E^{A^{R'}}
\end{array}
\right\}\left(E_{(i,k)}^{N}(a,b)\right).
\end{equation*}

\noindent then: 
\begin{itemize}

\item $a\succsim^{P}b$ if $E^{P}(a,b)$ is feasible and $\varepsilon^{P}(a,b)>0,$ where $\varepsilon^{P}(a,b)=\max\varepsilon$, \;\;s.t. constraints $E^{P}(a,b)$,

\item $a\succsim^{N}b$ if $E^{N}(a,b)$ is infeasible or $\varepsilon^{N}(a,b)\leq 0,$ where $\varepsilon^{N}(a,b)=\max \varepsilon$, \;\;s.t. constraints $E^{N}(a,b)$,

\item $a\succsim_{(i,k)}^{P}b$ if $E_{(i,k)}^{P}(a,b)$ is feasible and $\varepsilon_{(i,k)}^{P}(a,b)>0,$ where $\varepsilon_{(i,k)}^{P}(a,b)=\max \varepsilon$, \;\;s.t. constraints $E_{(i,k)}^{P}(a,b)$,

\item $a\succsim_{(i,k)}^{N}b$ if $E_{(i,k)}^{N}(a,b)$ is infeasible or $\varepsilon_{(i,k)}^{N}(a,b)\leq 0,$ where $\varepsilon_{(i,k)}^{N}(a,b)=\max\varepsilon$, \;\;s.t. constraints $E_{(i,k)}^{N}(a,b)$.

\end{itemize}

\section{Properties of necessary and possible preference relations}

\noindent The necessary and possible preference relations satisfy several interesting properties presented in the following propositions.

\begin{prop}\label{Prop_relations}
\hspace{0,1cm}
\begin{enumerate}
\item For each $a\in A,$ if $i\geq k$, $i,k\in\left\{1,\ldots,n\right\}$, then $U(a^{(i)})\geq U(a^{(k)}),$ \label{monotonicity}
\item For each $a\in A,$ $U(a^{(1)})\leq U(a)\leq U(a^{(n)}),$ \label{chain}
\item $\succsim^{N}\;\subseteq\;\succsim^{P}$, \label{Normal_Inclusion}
\item For all $i,k\in\left\{1,\ldots,n\right\}$, $\succsim_{(i,k)}^{N}\;\subseteq\;\succsim_{(i,k)}^{P}$, \label{inclus_nec_pos_n_point}
\item $\succsim^{N}$ is a partial preorder, i.e. a reflexive and transitive binary relation, \label{Normal_relation}
\item $\succsim^{P}$ is strongly complete and negatively transitive, \label{Possible_relation}
\item For all $a,b\in A$, $a\succsim^{N}b$ or $b\succsim^{P}a$, \label{comp_nec_pos}
\item $\Delta\;\subseteq\;\succsim^{N}$, \label{inclusion_dominance_nec}
\item Given $a,b,c\in A$ such that $a\succsim^{N} b$ and $b\succsim^{P}c$, then $a\succsim^{P}c$, \label{necc_pos}
\item Given $a,b,c\in A$ such that $a\succsim^{P} b$ and $b\succsim^{N}c$, then $a\succsim^{P}c$, \label{pos_necc}
\item Given $a,b,c\in A$ such that $a\Delta b$ and $b\succsim^{N}c$, then $a\succsim^{N}c$, \label{delta_nec}
\item Given $a,b,c\in A$ such that $a\succsim^{N} b$ and $b\Delta c$, then $a\succsim^{N}c$, \label{nec_delta}
\item Given $a,b,c\in A$ such that $a\Delta b$ and $b\succsim^{P}c$, then $a\succsim^{P}c$, \label{delta_pos}
\item Given $a,b,c\in A$ such that $a\succsim^{P} b$ and $b\Delta c$, then $a\succsim^{P}c$, \label{pos_delta}
\item For all $i,k\in\left\{1,\ldots,n\right\}$, $\Delta^{(i,k)}\;\subseteq\;\succsim_{(i,k)}^{N},$ \label{inclusion_dominance}
\item Given $a,b,c\in A$, $i,r\in\left\{1,\ldots,n\right\}$ such that $a\succsim_{(i,n)}^{N} b$, $b\succsim^{N}c$ and $r\geq i$, then $a\succsim_{(r,1)}^{N}c$,\label{nec_ind_nec}
\item Given $a,b,c\in A$, $k,r\in\left\{1,\ldots,n\right\}$ such that $a\succsim^{N} b$, $b\succsim_{(1,k)}^{N}c$ and $r\leq k$, then $a\succsim_{(n,r)}^{N}c$, \label{nec_nec_ind}
\item Given $a,b,c\in A$, $i,r\in\left\{1,\ldots,n\right\}$ such that $a\succsim_{(i,n)}^{P} b$, $b\succsim^{N}c$ and $r\geq i$, then $a\succsim_{(r,1)}^{P}c$, \label{pos_ind_nec}
\item Given $a,b,c\in A$, $k,r\in\left\{1,\ldots,n\right\}$ such that $a\succsim^{N} b$, $b\succsim_{(1,k)}^{P}c$ and $r\leq k$, then $a\succsim_{(n,r)}^{P}c$, \label{nec_pos_ind}
\item Given $a,b,c\in A$, $i,r\in\left\{1,\ldots,n\right\}$ such that $a\succsim_{(i,n)}^{N} b$, $b\succsim^{P}c$ and $r\geq i$, then $a\succsim_{(r,1)}^{P}c$, \label{nec_ind_pos}
\item Given $a,b,c\in A$, $k,r\in\left\{1,\ldots,n\right\}$ such that $a\succsim^{P} b$, $b\succsim_{(1,k)}^{N}c$ and $r\leq k$, then $a\succsim_{(n,r)}^{P}c$, \label{pos_nec_ind}
\item Given $a,b,c\in A$, $i,r\in\left\{1,\ldots,n\right\}$ such that $a\Delta^{(i,n)}b$, $b\succsim^{N}c$ and $r\geq i$, then $a\succsim_{(r,1)}^{N}c$, \label{delta_ind_nec}
\item Given $a,b,c\in A$, $k,r\in\left\{1,\ldots,n\right\}$ such that $a\succsim^{N} b$, $b\Delta^{(1,k)}c$ and $r\leq k$, then $a\succsim_{(n,r)}^{N}c$, \label{nec_dom_ind}
\item Given $a,b,c\in A$, $i,r\in\left\{1,\ldots,n\right\}$ such that $a\Delta^{(i,n)}b$, $b\succsim^{P}c$ and $r\geq i$, then $a\succsim_{(r,1)}^{P}c$, \label{delta_ind_pos}
\item Given $a,b,c\in A$, $k,r\in\left\{1,\ldots,n\right\}$ such that $a\succsim^{P} b$, $b\Delta^{(1,k)}c$ and $r\leq k$, then $a\succsim_{(n,r)}^{P}c$, \label{pos_dom_ind}
\item Given $a,b,c\in A$, $k,r\in\left\{1,\ldots,n\right\}$ such that $a\Delta b$, $b\succsim_{(1,k)}^{N}c$ and $r\leq k$, then $a\succsim_{(n,r)}^{N}c$, \label{delta_nec_ind}
\item Given $a,b,c\in A$, $i,r\in\left\{1,\ldots,n\right\}$ such that $a\succsim_{(i,n)}^{N} b$, $b\Delta c$ and $r\geq i$, then $a\succsim_{(r,1)}^{N}c$, \label{nec_ind_dom}
\item Given $a,b,c\in A$, $k,r\in\left\{1,\ldots,n\right\}$ such that $a\Delta b$, $b\succsim_{(1,k)}^{P}c$ and $r\leq k$, then $a\succsim_{(n,r)}^{P}c$, \label{delta_pos_ind}
\item Given $a,b,c\in A$, $i,r\in\left\{1,\ldots,n\right\}$ such that $a\succsim_{(i,n)}^{P} b$, $b\Delta c$ and $r\geq i$, then $a\succsim_{(r,1)}^{P}c$, \label{pos_ind_dom}
\item If $i\geq k$, $i,k\in\left\{1,\ldots,n\right\}$, then $\succsim_{(i,k)}^{N}$ is reflexive, \label{reflexivity}
\item If $i\leq k$, $i,k\in\left\{1,\ldots,n\right\}$, then $\succsim_{(i,k)}^{N}$ is transitive, \label{transitivity}
\item For each $i\in\left\{1,\ldots,n\right\}$, $\succsim_{(i,i)}^{N}$ is a partial preorder, \label{impr_partial_preord}
\item For all $a,b\in A$, for all $i,k\in\left\{1,\ldots,n\right\}$, we have $a\succsim_{(i,k)}^{N}b$ or $b\succsim_{(k,i)}^{P}a$, \label{imprec_completeness}  
\item If $i\geq k$, $i,k\in\left\{1,\ldots,n\right\}$, then $\succsim_{(i,k)}^{P}$ is strongly complete and negatively transitive, \label{refl_negtran_imp_poss}
\item If $i_1\geq i$ and $k_1\leq k$, $i,k,i_1,k_1\in\left\{1,\dots,n\right\}$, then $\succsim_{(i,k)}^{N}\;\subseteq\;\succsim_{(i_1,k_1)}^{N}$, \label{nec_inclusion}
\item If $i_1\geq i$ and $k_1\leq k$, $i,k,i_1,k_1\in\left\{1,\dots,n\right\}$, then $\succsim_{(i,k)}^{P}\;\subseteq\;\succsim_{(i_1,k_1)}^{P}$, \label{pos_inclusion}
\item For all $i,k=1,\ldots,n$, $\succsim_{(1,n)}^{N}\;\subseteq\;\succsim_{(i,k)}^{N}\;\subseteq\;\succsim_{(n,1)}^{N},$ \label{N_incl_1}
\item For all $i,k=1,\ldots,n$, $\succsim_{(1,n)}^{P}\;\subseteq\;\succsim_{(i,k)}^{P}\;\subseteq\;\succsim_{(n,1)}^{P},$ \label{P_incl_1}
\item $\succsim_{(1,n)}^{N}\;\subseteq\;\succsim^{N}\;\subseteq\;\succsim_{(n,1)}^{N},$ \label{N_incl}
\item $\succsim_{(1,n)}^{P}\;\subseteq\;\succsim^{P}\;\subseteq\;\succsim_{(n,1)}^{P},$ \label{P_incl}
\item If $a\succsim_{(i,k)}^{N}b,$ $b\succsim_{(i_1,k_1)}^{N}c,$ and $k\geq i_1$, with $i,k,i_1,k_1\in\left\{1,\ldots,n\right\}$, then $a\succsim_{(r,s)}^{N}c$, where $r,s\in\left\{1,\ldots,n\right\}:$ $r\geq i$ and $s\leq k_1,$ \label{nec_transitivity}
\item If $a\succsim_{(i,k)}^{N}b,$ $b\succsim_{(i_1,k_1)}^{P}c,$ and $k\geq i_1$, with $i,k,i_1,k_1\in\left\{1,\ldots,n\right\}$, then $a\succsim_{(r,s)}^{P}c$, where $r,s\in\left\{1,\ldots,n\right\}:$ $r\geq i$ and $s\leq k_1,$ \label{necpos_transitivity}
\item If $a\succsim_{(i,k)}^{P}b,$ $b\succsim_{(i_1,k_1)}^{N}c,$ and $k\geq i_1$, with $i,k,i_1,k_1\in\left\{1,\ldots,n\right\}$, then $a\succsim_{(r,s)}^{P}c$, where $r,s\in\left\{1,\ldots,n\right\}:$ $r\geq i$ and $s\leq k_1,$ \label{posnec_transitivity}
\item If $a\Delta^{(i,k)}b,$ $b\succsim_{(i_1,k_1)}^{N}c,$ and $k\geq i_1$, with $i,k,i_1,k_1\in\left\{1,\ldots,n\right\}$, then $a\succsim_{(r,s)}^{N}c$, where $r,s\in\left\{1,\ldots,n\right\}:$ $r\geq i$ and $s\leq k_1,$ 
\item If $a\succsim_{(i,k)}^{N}b,$ $b\Delta^{(i_1,k_1)}c,$ and $k\geq i_1$, with $i,k,i_1,k_1\in\left\{1,\ldots,n\right\}$, then $a\succsim_{(r,s)}^{N}c$, 
where $r,s\in\left\{1,\ldots,n\right\}:$ $r\geq i$ and $s\leq k_1,$
\item If $a\Delta^{(i,k)}b,$ $b\succsim_{(i_1,k_1)}^{P}c,$ and $k\geq i_1$, with $i,k,i_1,k_1\in\left\{1,\ldots,n\right\}$, then $a\succsim_{(r,s)}^{P}c$, where $r,s\in\left\{1,\ldots,n\right\}:$ $r\geq i$ and $s\leq k_1,$
\item If $a\succsim_{(i,k)}^{P}b,$ $b\Delta^{(i_1,k_1)}c,$ and $k\geq i_1$, with $i,k,i_1,k_1\in\left\{1,\ldots,n\right\}$, then $a\succsim_{(r,s)}^{P}c$, where $r,s\in\left\{1,\ldots,n\right\}:$ $r\geq i$ and $s\leq k_1,$
\end{enumerate}
\end{prop}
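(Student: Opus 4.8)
The plan is to obtain this last statement as an immediate consequence of two facts already proved in Proposition~\ref{Prop_relations}: the inclusion $\Delta^{(i_1,k_1)}\subseteq\;\succsim^{N}_{(i_1,k_1)}$ (point~\ref{inclusion_dominance}) and the mixed transitivity of the possible $(i,k)$-preference over the necessary $(i,k)$-preference (point~\ref{posnec_transitivity}). Concretely, from $b\,\Delta^{(i_1,k_1)}\,c$ and point~\ref{inclusion_dominance} we get $b\succsim^{N}_{(i_1,k_1)}c$; plugging this, together with the remaining hypotheses $a\succsim^{P}_{(i,k)}b$ and $k\geq i_1$, into point~\ref{posnec_transitivity} yields $a\succsim^{P}_{(r,s)}c$ for every $r,s\in\{1,\ldots,n\}$ with $r\geq i$ and $s\leq k_1$. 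That is precisely the claim, so this is the route I would take for the actual write-up.

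For transparency I would also record the self-contained version of the argument. By definition of the $(i,k)$-possible preference relation there is a compatible $U^{*}\in\mathcal U$ with $U^{*}(a^{(i)})\geq U^{*}(b^{(k)})$. Since the sub-marginal value functions in $\left(E^{A^{R'}}\right)$ are non-decreasing and the indicators of $b$ are increasingly ordered, $k\geq i_1$ gives $g_j^{k}(b)\geq g_j^{i_1}(b)$ for all $j$; combined with $b\,\Delta^{(i_1,k_1)}\,c$, i.e.\ $g_j^{i_1}(b)\geq g_j^{k_1}(c)$ for all $j$ (Definition~\ref{dominance}, up to the obvious re-labelling), this shows that the precise alternative $b^{(k)}$ normally dominates the precise alternative $c^{(k_1)}$ — equivalently one may invoke Proposition~\ref{Dominance Proposition}, point~\ref{Dom_incl}. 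Hence, by point~\ref{inclusion_dominance_nec} (or directly by monotonicity of $U^{*}$), $U^{*}(b^{(k)})\geq U^{*}(c^{(k_1)})$, and chaining the two inequalities gives $U^{*}(a^{(i)})\geq U^{*}(c^{(k_1)})$. Finally point~\ref{monotonicity} yields $U^{*}(a^{(r)})\geq U^{*}(a^{(i)})$ for $r\geq i$ and $U^{*}(c^{(k_1)})\geq U^{*}(c^{(s)})$ for $s\leq k_1$, so $U^{*}(a^{(r)})\geq U^{*}(c^{(s)})$ and therefore $a\succsim^{P}_{(r,s)}c$.

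The only step that deserves a line of justification is the reduction ``$b\,\Delta^{(i_1,k_1)}\,c$ and $k\geq i_1$ imply $b^{(k)}\Delta c^{(k_1)}$'': it rests on the ordering $g_j^{1}(b)\leq\cdots\leq g_j^{n}(b)$ of the indicators and on the sign constraints $u_{j,i}(x_j^{k})-u_{j,i}(x_j^{k-1})\geq 0$ in $\left(E^{A^{R'}}\right)$, exactly the monotonicity ingredients used everywhere in this section. Since $\mathcal U\neq\emptyset$ is assumed, there is no feasibility issue to dispose of, and the symmetric companion statement (with $\succsim^{P}$ and $\Delta$ interchanged between the left and right factors) follows by an entirely parallel computation. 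No genuine obstacle remains: the proposition is essentially a corollary of points~\ref{inclusion_dominance} and~\ref{posnec_transitivity}.
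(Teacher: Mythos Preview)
Your proposal is correct and matches the paper's proof exactly: the paper also derives item~47 as an immediate consequence of point~\ref{inclusion_dominance} (to turn $b\,\Delta^{(i_1,k_1)}c$ into $b\succsim^{N}_{(i_1,k_1)}c$) and point~\ref{posnec_transitivity}. Your additional self-contained argument is sound but not needed for the paper's presentation.
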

\begin{proof}
See appendix.
\end{proof}

\begin{note}
Later, we will also use the following notation:
\begin{itemize}
\item ``strongly necessary preference relation'', to indicate the necessary preference relation $\succsim_{(1,n)}^{N}$ (denoted by $\succsim^{SN}$),
\item ``strongly possible preference relation'', to indicate the possible preference relation $\succsim_{(1,n)}^{P}$ (denoted by $\succsim^{SP}$),
\item ``weakly necessary preference relation'', to indicate the necessary preference relation $\succsim_{(n,1)}^{N}$ (denoted by $\succsim^{WN}$), and
\item ``weakly possible preference relation'', to indicate the possible preference relation $\succsim_{(n,1)}^{P}$ (denoted by $\succsim^{WP}$).
\end{itemize} 
In case of $n=2$-point intervals, the only preference relations we can consider are the four preference relations cited above, as well as the necessary and possible preference relations from Definitions (\ref{pos_pref}) and (\ref{nec_pref}).
\end{note}

Considering the weak, normal and strong dominance relations, together with weak, normal and strong preference relations as a straightforward consequence of Proposition \ref{Prop_relations} and Proposition \ref{salvo_prop} we obtain the set of dependencies shown in Figure \ref{Consequences}:
\begin{prop}\label{picture_prop}
Implications in Figure \ref{Consequences} hold.

\begin{figure}[!ht]
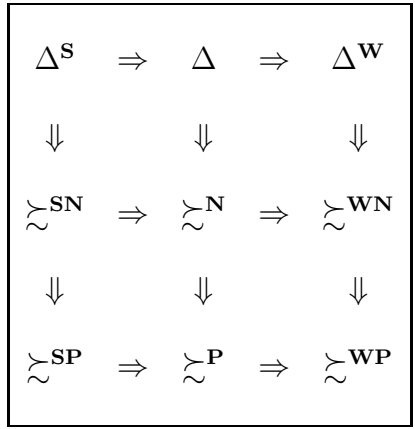

\caption{Dependencies between all kinds of dominance relations and preference relations}\label{Consequences}
\begin{center}
\begin{tabular}{|ccccc|}
\hline
&&&&\\
$\Delta^{\mathbf{S}}$ & $\mathbf{\Rightarrow}$ & $\Delta$ & $\mathbf{\Rightarrow}$ & $\Delta^{\mathbf{W}}$\\
&&&&\\
$\mathbf{\Downarrow}$ & & $\mathbf{\Downarrow}$ & & $\mathbf{\Downarrow}$ \\
 & & & & \\
$\mathbf{\succsim^{SN}}$ & $\mathbf{\Rightarrow}$ & $\mathbf{\succsim^{N}}$ & $\mathbf{\Rightarrow}$ & $\mathbf{\succsim^{WN}}$\\
 & & & & \\
$\mathbf{\Downarrow}$ & & $\mathbf{\Downarrow}$ & & $\mathbf{\Downarrow}$ \\
 & & & & \\
$\mathbf{\succsim^{SP}}$ & $\mathbf{\Rightarrow}$ & $\mathbf{\succsim^{P}}$ & $\mathbf{\Rightarrow}$ & $\mathbf{\succsim^{WP}}$\\
&&&&\\
\hline
\end{tabular}
\end{center}
\end{figure}
Moreover:
\begin{enumerate}
\item $a\succsim^{SN}b$ and $b\succsim^{N}c$ $\Rightarrow$ $a\succsim^{N}c$, \label{SN_N}
\item $a\succsim^{SN}b$ and $b\succsim^{P}c$ $\Rightarrow$ $a\succsim^{P}c$, \label{SN_P}
\item $a\succsim^{N}b$ and $b\succsim^{SN}c$ $\Rightarrow$ $a\succsim^{N}c$, \label{N_SN}
\item $a\succsim^{N}b$ and $b\succsim^{SP}c$ $\Rightarrow$ $a\succsim^{P}c$, \label{N_SP}
\item $a\succsim^{SP}b$ and $b\succsim^{N}c$ $\Rightarrow$ $a\succsim^{P}c$, \label{SP_N}
\item $a\succsim^{P}b$ and $b\succsim^{SN}c$ $\Rightarrow$ $a\succsim^{P}c$, \label{P_SN}
\item $a\succsim^{N}b$ or $b\succsim^{P}a,$
\item $a\succsim^{SN}b$ or $b\succsim^{WP}a,$
\item $a\succsim^{WN}b$ or $b\succsim^{SP}a,$
\item $a\Delta^{S}b$ and $b\Delta c$ $\Rightarrow$ $a\Delta^{S}c$,
\item $a\Delta^{S}b$ and $b\Delta^{W}c$ $\Rightarrow$ $a\Delta^{W}c$,
\item $a\Delta^{S}b$ and $b\succsim^{SN}c$ $\Rightarrow$ $a\succsim^{SN}c$,
\item $a\Delta^{S}b$ and $b\succsim^{N}c$ $\Rightarrow$ $a\succsim^{N}c$,
\item $a\Delta^{S}b$ and $b\succsim^{WN}c$ $\Rightarrow$ $a\succsim^{WN}c$,
\item $a\Delta^{S}b$ and $b\succsim^{SP}c$ $\Rightarrow$ $a\succsim^{SP}c$,
\item $a\Delta^{S}b$ and $b\succsim^{P}c$ $\Rightarrow$ $a\succsim^{P}c$,
\item $a\Delta^{S}b$ and $b\succsim^{WP}c$ $\Rightarrow$ $a\succsim^{WP}c$,
\item $a\Delta b$ and $b\Delta^{S}c$ $\Rightarrow$ $a\Delta^{S}c$,
\item $a\Delta b$ and $b\Delta^{W}c$ $\Rightarrow$ $a\Delta^{W}c$,
\item $a\Delta b$ and $b\succsim^{SN}c$ $\Rightarrow$ $a\succsim^{N}c$,
\item $a\Delta b$ and $b\succsim^{SP}c$ $\Rightarrow$ $a\succsim^{P}c$,
\item $a\Delta^{W}b$ and $b\Delta c$ $\Rightarrow$ $a\Delta^{W}c$,
\item $a\Delta^{W}b$ and $b\succsim^{SN}c$ $\Rightarrow$ $a\Delta^{WN}c$,
\item $a\Delta^{W}b$ and $b\succsim^{SP}c$ $\Rightarrow$ $a\succsim^{WP}c$,
\item $a\succsim^{SN}b$ and $b\Delta^{S}c$ $\Rightarrow$ $a\succsim^{SN}c$,
\item $a\succsim^{SN}b$ and $b\Delta c$ $\Rightarrow$ $a\succsim^{N}c$,
\item $a\succsim^{SN}b$ and $b\Delta^{W}c$ $\Rightarrow$ $a\succsim^{WN}c$,
\item $a\succsim^{SN}b$ and $b\succsim^{WN}c$ $\Rightarrow$ $a\succsim^{WN}c$,
\item $a\succsim^{SN}b$ and $b\succsim^{SP}c$ $\Rightarrow$ $a\succsim^{SP}c$,
\item $a\succsim^{SN}b$ and $b\succsim^{WP}c$ $\Rightarrow$ $a\succsim^{WP}c$,
\item $a\succsim^{N}b$ and $b\Delta^{S}c$ $\Rightarrow$ $a\succsim^{N}c$,
\item $a\succsim^{WN}b$ and $b\Delta^{S}c$ $\Rightarrow$ $a\succsim^{WN}c$,
\item $a\succsim^{WN}b$ and $b\succsim^{SN}c$ $\Rightarrow$ $a\succsim^{WN}c$,
\item $a\succsim^{WN}b$ and $b\succsim^{SP}c$ $\Rightarrow$ $a\succsim^{WP}c$,
\item $a\succsim^{SP}b$ and $b\Delta^{S}c$ $\Rightarrow$ $a\succsim^{SP}c$,
\item $a\succsim^{SP}b$ and $b\Delta c$ $\Rightarrow$ $a\succsim^{P}c$,
\item $a\succsim^{SP}b$ and $b\Delta^{W}c$ $\Rightarrow$ $a\succsim^{WP}c$,
\item $a\succsim^{SP}b$ and $b\succsim^{SN}c$ $\Rightarrow$ $a\succsim^{SP}c$,
\item $a\succsim^{SP}b$ and $b\succsim^{WN}c$ $\Rightarrow$ $a\succsim^{WP}c$,
\item $a\succsim^{P}b$ and $b\Delta^{S}c$ $\Rightarrow$ $a\succsim^{P}c$,
\item $a\succsim^{WP}b$ and $b\Delta^{S}c$ $\Rightarrow$ $a\succsim^{WP}c$,
\item $a\succsim^{WP}b$ and $b\succsim^{SN}c$ $\Rightarrow$ $a\succsim^{WP}c$.
\end{enumerate}
\end{prop}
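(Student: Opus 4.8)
The plan is to derive every assertion of Proposition~\ref{picture_prop} by specialising, to the two extreme index pairs $(1,n)$ and $(n,1)$, the general facts already established in Propositions~\ref{Dominance Proposition}, \ref{salvo_prop} and~\ref{Prop_relations}, using throughout the identifications $\Delta^{S}=\Delta^{(1,n)}$, $\Delta^{W}=\Delta^{(n,1)}$, $\succsim^{SN}=\succsim^{N}_{(1,n)}$, $\succsim^{WN}=\succsim^{N}_{(n,1)}$, $\succsim^{SP}=\succsim^{P}_{(1,n)}$ and $\succsim^{WP}=\succsim^{P}_{(n,1)}$. The diagram of Figure~\ref{Consequences} requires no new argument: the top row $\Delta^{S}\Rightarrow\Delta\Rightarrow\Delta^{W}$ is Proposition~\ref{salvo_prop}(\ref{dom_chain}); the middle and bottom rows are Proposition~\ref{Prop_relations}(\ref{N_incl}) and~(\ref{P_incl}); the central vertical arrows $\Delta\Rightarrow\succsim^{N}\Rightarrow\succsim^{P}$ are Proposition~\ref{Prop_relations}(\ref{inclusion_dominance_nec}) and~(\ref{Normal_Inclusion}); and the four outer vertical arrows are Proposition~\ref{Prop_relations}(\ref{inclusion_dominance}) and~(\ref{inclus_nec_pos_n_point}) read at $(i,k)=(1,n)$ and at $(i,k)=(n,1)$.

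For the numbered implications I would split into two cases according to the form of the conclusion. When the conclusion is one of the un-indexed relations $\succsim^{N}$ or $\succsim^{P}$ --- this covers items 1--6, 13, 16, 20, 21, 26, 31, 36 and 40 --- I would first use the inclusions just recorded ($\succsim^{SN}\subseteq\succsim^{N}$, $\succsim^{SP}\subseteq\succsim^{P}$, $\Delta^{S}\subseteq\Delta$) to replace each ``strong'' premise by its un-indexed counterpart, and then conclude with the matching un-indexed composition property: transitivity of $\succsim^{N}$, i.e.\ Proposition~\ref{Prop_relations}(\ref{Normal_relation}), for items 1 and 3; Proposition~\ref{Prop_relations}(\ref{necc_pos})--(\ref{pos_necc}) for the mixed necessary/possible items 2, 4, 5 and 6; and Proposition~\ref{Prop_relations}(\ref{delta_nec})--(\ref{pos_delta}) for the items carrying a $\Delta$ premise. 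Item 7 is literally Proposition~\ref{Prop_relations}(\ref{comp_nec_pos}), and items 8 and 9 are Proposition~\ref{Prop_relations}(\ref{imprec_completeness}) read at $(i,k)=(1,n)$ and at $(i,k)=(n,1)$ respectively.

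When the conclusion is one of $\Delta^{S},\Delta^{W},\succsim^{SN},\succsim^{WN},\succsim^{SP},\succsim^{WP}$ --- all the remaining items --- I would apply the relevant composition result directly, with the two index pairs occurring in it set to $(1,n)$ or $(n,1)$ as dictated by the hypotheses: Proposition~\ref{Dominance Proposition}(\ref{dom_mixed_transitivity}), (\ref{domind_dom_transitivity}), (\ref{dom_domind_transitivity}) for the purely dominance items 10, 11, 18, 19 and 22, and Proposition~\ref{Prop_relations}(\ref{nec_transitivity})--(\ref{posnec_transitivity}) together with the four unlabelled $\Delta^{(i,k)}$-versions stated at the end of Proposition~\ref{Prop_relations} for the rest. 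In every such case the hypothesis ``$k\ge i_{1}$'' of the invoked lemma reduces to $n\ge1$ or to $1\ge1$, hence holds automatically, and the lemma delivers the conclusion for every admissible pair $(r,s)$ with $r\ge i$ and $s\le k_{1}$; the premises of each item are arranged precisely so that $i=1$ and $k_{1}=n$ whenever the target is a ``strong'' relation, while $r=n$ and $s=1$ are always admissible whenever the target is a ``weak'' relation, so the stated target always lies in the admissible range. For instance, item 12 is the $\Delta^{(i,k)}$-version of Proposition~\ref{Prop_relations}(\ref{nec_transitivity}) with $(i,k)=(i_{1},k_{1})=(1,n)$ and $(r,s)=(1,n)$; item 33 is Proposition~\ref{Prop_relations}(\ref{nec_transitivity}) with $(i,k)=(n,1)$, $(i_{1},k_{1})=(1,n)$, $(r,s)=(n,1)$; and item 11 is Proposition~\ref{Dominance Proposition}(\ref{dom_mixed_transitivity}) with $(i,k)=(1,n)$, $(i_{1},k_{1})=(n,1)$, $(r,s)=(n,1)$. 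If one prefers to read the transitivity lemmas as producing only the tightest index pair, the monotonicity inclusions Proposition~\ref{Prop_relations}(\ref{nec_inclusion})--(\ref{pos_inclusion}) --- resp.\ Proposition~\ref{Dominance Proposition}(\ref{Dom_incl}) --- close the small remaining gap.

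The only genuine effort is bookkeeping: running through all forty-two items to verify that the index pattern of each pair of premises matches a hypothesis of one of the cited lemmas and that the index pair produced in its conclusion contains the stated target; no idea beyond Propositions~\ref{Dominance Proposition}, \ref{salvo_prop} and~\ref{Prop_relations} is needed. I would also note that the entry ``$a\Delta^{W}b$ and $b\succsim^{SN}c\Rightarrow a\Delta^{WN}c$'' should read ``$\Rightarrow a\succsim^{WN}c$'' (no relation $\Delta^{WN}$ has been defined), and that, so corrected, it is the instance of the $\Delta^{(i,k)}$-version of Proposition~\ref{Prop_relations}(\ref{nec_transitivity}) in which $(i,k)=(n,1)$ and $(i_{1},k_{1})=(1,n)$.
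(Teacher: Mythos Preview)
Your proposal is correct and covers every item. The overall strategy---reduce everything to Propositions~\ref{Dominance Proposition}, \ref{salvo_prop} and~\ref{Prop_relations} by specialising the index pairs---is also what the paper does, and for the items with indexed conclusions (those ending in $\succsim^{SN}$, $\succsim^{WN}$, $\succsim^{SP}$, $\succsim^{WP}$, $\Delta^{S}$, $\Delta^{W}$) your treatment matches the paper's ``straightforward consequences'' remark exactly.

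There is, however, a small but genuine methodological difference for the items whose conclusion is an \emph{un-indexed} relation (items 1--6, 13, 16, 20, 21, 26, 31, 36, 40). The paper proves items 1--6, 13 and 16 by unfolding the definitions and writing explicit chains such as $U(a)\ge U(a^{(1)})\ge U(b^{(n)})\ge U(b)\ge U(c)$, invoking Proposition~\ref{Prop_relations}(\ref{chain}) at the element level; it then bootstraps items 20, 21, 26, 31, 36, 40 from the freshly proved items 1--6 together with $\Delta\subseteq\succsim^{N}$ or $\Delta^{(i,k)}\subseteq\succsim^{N}_{(i,k)}$. You instead work entirely at the relational level: using the inclusions $\succsim^{SN}\subseteq\succsim^{N}$, $\succsim^{SP}\subseteq\succsim^{P}$, $\Delta^{S}\subseteq\Delta$ (already established as part of the diagram) to downgrade each ``strong'' premise, and then applying the existing un-indexed transitivity properties (\ref{Normal_relation}), (\ref{necc_pos})--(\ref{pos_delta}) of Proposition~\ref{Prop_relations}. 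Your route is more uniform and avoids re-proving chain inequalities that are already packaged in Proposition~\ref{Prop_relations}(\ref{N_incl})--(\ref{P_incl}); the paper's route is more self-contained for a reader who wants to see why each implication holds without chasing references. Both are equally valid.

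Your observation about item~23 is correct: ``$a\Delta^{WN}c$'' is a typo for ``$a\succsim^{WN}c$'', and with that correction the item is exactly the instance you describe.
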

\begin{proof}
See Appendix.
\end{proof}

\vspace{0,5cm}
\subsection{Group Decisions}
In many decision making problems, there are more than one DM. For example, in case of decisions related to land development, a group of stakeholders  with different perceptions of predefined criteria has to be involved. Robust Ordinal Regression (\cite{greco2009possible,ROR_group}) has been applied to group decisions as follows. Considering a set ${\cal DM}$ of DMs, and a set of pairwise comparisons provided by the DMs belonging to ${\cal D^{'}}\subseteq{\cal DM}$, for each $d_{h}\in {\cal D^{'}}$ we compute the necessary and possible preference relations $\succsim^{N}_{h}$ and $\succsim^{P}_{h}$. Then, we can represent consensus between decision makers from ${\cal DM}$, defining the following preference relations for all ${\cal D'}\subseteq{\cal DM}$:
\begin{itemize}
\item the necessary-necessary preference relation ($\succsim^{N,N}_{\cal D^{'}}$), for which $a$ is necessarily preferred to $b$ for all $d_{h}\in{\cal D^{'}}$,
\item the necessary-possible preference relation ($\succsim^{N,P}_{\cal D^{'}}$), for which $a$ is necessarily preferred to $b$ for at least one $d_{h}\in{\cal D^{'}}$,
\item the possible-necessary preference relation ($\succsim^{P,N}_{\cal D^{'}}$), for which $a$ is possibly preferred to $b$ for all $d_{h}\in{\cal D^{'}}$,
\item the possible-possible preference relation ($\succsim^{P,P}_{\cal D^{'}}$), for which $a$ is possibly preferred to $b$ for at least one $d_{h}\in{\cal D^{'}}$.
\end{itemize}

\noindent The above preference relations $\succsim^{N,N}_{\cal D^{'}}$, $\succsim^{N,P}_{\cal D^{'}}$, $\succsim^{P,N}_{\cal D^{'}}$ and $\succsim^{P,P}_{\cal D^{'}}$, extensively discussed in \cite{ROR_group}, satisfy some interesting properties, as for example:
\begin{itemize}
\item $\succsim^{N,N}_{\cal D^{'}} \;\subseteq \; \succsim^{N,P}_{\cal D^{'}} \; \subseteq \; \succsim^{P,P}_{\cal D^{'}}$, 
\item $\succsim^{N,N}_{\cal D^{'}} \;\subseteq \; \succsim^{P,N}_{\cal D^{'}} \; \subseteq \; \succsim^{P,P}_{\cal D^{'}}$
\end{itemize}

In case of $n$-point imprecise evaluations, we can extend the number of preference relations considered in group decisions, giving the following definition:

\begin{defn}
\noindent For all $i,k\in\left\{1,\ldots,n\right\}$, and for all $a,b\in A:$
\begin{itemize}
\item $a\succsim^{N,N}_{(i,k),{\cal D^{'}}}b$ if $a$ is $(i,k)$-necessarily preferred to $b$ for all DMs $d_{h}\in{\cal D^{'}},$
\item $a\succsim^{N,P}_{(i,k),{\cal D^{'}}}b$ if $a$ is $(i,k)$-necessarily preferred to $b$ for at least one DM $d_{h}\in{\cal D^{'}},$
\item $a\succsim^{P,N}_{(i,k),{\cal D^{'}}}b$ if $a$ is $(i,k)$-possibly preferred to $b$ for all DMs $d_{h}\in{\cal D^{'}},$
\item $a\succsim^{P,P}_{(i,k),{\cal D^{'}}}b$ if $a$ is $(i,k)$-possibly preferred to $b$ for at least one DM $d_{h}\in{\cal D^{'}}.$
\end{itemize}
\end{defn}

\begin{note}
Analogously to the case of a single DM, we can state the following equalities:
\begin{itemize}
\item $\succsim_{(1,n),{\cal D^{'}}}^{N,N}=\succsim^{SN,N}_{\cal D^{'}}$, \;\; $\succsim_{(1,n),{\cal D^{'}}}^{N,P}=\succsim^{SN,P}_{\cal D^{'}}$,\;\; $\succsim_{(1,n),{\cal D^{'}}}^{P,N}=\succsim^{SP,N}_{\cal D^{'}}$, \;\; $\succsim_{(1,n),{\cal D^{'}}}^{P,P}=\succsim^{SP,P}_{\cal D^{'}}$,
\item $\succsim_{(n,1),{\cal D^{'}}}^{N,N}=\succsim^{WN,N}_{\cal D^{'}}$,\;\; $\succsim_{(n,1),{\cal D^{'}}}^{N,P}=\succsim^{WN,N}_{\cal D^{'}}$, \;\; $\succsim_{(n,1),{\cal D^{'}}}^{P,N}=\succsim^{WP,N}_{\cal D^{'}}$, \;\; $\succsim_{(n,1),{\cal D^{'}}}^{P,P}=\succsim^{WP,P}_{\cal D^{'}}$.
\end{itemize}
\end{note}

In the case of $n$-point imprecise evaluations we can formulate the following proposition:

\begin{prop}\label{group_property}
Given $i,k,i_1,k_1\in\left\{1,\ldots,n\right\}$, ${\cal D^{'}}\subseteq{\cal D}$ and $R_{1},R_{2},R_{1}^{'},R_{2}^{'}\in\left\{ P,N\right\}$ such that:
\begin{itemize}
\item $i_{1}\geq i$ and $k_{1}\leq k$,
\item $\succsim_{(i,k)}^{R_1}\;\subseteq\;\succsim_{(i,k)}^{R_{1}^{'}}$,
\end{itemize}
then
$$\succsim^{R_1,R_2}_{(i,k),{\cal D^{'}}}\;\subseteq \;\succsim^{R_1^{'},R_2^{'}}_{(i_1,k_1),{\cal D^{'}}} \;\; \mbox{where} \;\; 
R_{2}^{'}=\left\{
\begin{array}{lll}
N \;\mbox{or} \; P & \mbox{if} & R_{2}=N\\
\\
P & \mbox{if} & R_{2}=P.\\ 
\end{array}
\right.
$$

\end{prop}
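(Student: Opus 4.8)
The proposition asks us to chain two monotonicity facts: the one on the $(i,k)$-indices (coming from Proposition~\ref{Prop_relations}, parts~\ref{nec_inclusion} and~\ref{pos_inclusion}) and the one on the group aggregation (coming from the definition of $\succsim^{R_1,R_2}_{(i,k),{\cal D^{'}}}$ together with the elementary inclusion $\succsim^{N}_{h}\subseteq\succsim^{P}_{h}$ for each DM, i.e. part~\ref{inclus_nec_pos_n_point}). So the first thing I would do is unfold the definitions: $a\succsim^{R_1,R_2}_{(i,k),{\cal D^{'}}}b$ means ``for all $d_h\in{\cal D^{'}}$'' (if $R_2=N$) or ``for at least one $d_h\in{\cal D^{'}}$'' (if $R_2=P$) we have $a\succsim^{R_1}_{(i,k),h}b$, where the subscript $h$ indicates that the $(i,k)$-necessary/possible relation is computed with respect to DM $d_h$'s compatible value functions.

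The core of the argument is a single pointwise implication valid for each fixed $d_h$: if $a\succsim^{R_1}_{(i,k),h}b$ and the hypotheses $i_1\geq i$, $k_1\leq k$ and $\succsim^{R_1}_{(i,k)}\subseteq\succsim^{R_1'}_{(i,k)}$ hold, then $a\succsim^{R_1'}_{(i_1,k_1),h}b$. I would establish this by first applying the index-monotonicity property (Proposition~\ref{Prop_relations}, part~\ref{nec_inclusion} if $R_1=N$, part~\ref{pos_inclusion} if $R_1=P$) to pass from $(i,k)$ to $(i_1,k_1)$ while keeping the mode $R_1$, and then applying the hypothesis $\succsim^{R_1}_{(i_1,k_1)}\subseteq\succsim^{R_1'}_{(i_1,k_1)}$ to pass from mode $R_1$ to mode $R_1'$. (Strictly, the stated hypothesis is $\succsim^{R_1}_{(i,k)}\subseteq\succsim^{R_1'}_{(i,k)}$ at the original indices; but the only nontrivial content is $N\subseteq P$, which holds at every pair of indices by part~\ref{inclus_nec_pos_n_point}, so the inclusion holds at $(i_1,k_1)$ as well — I would note this and use it.) All of this is ``for each $d_h$''; it is purely a statement about one DM's set of compatible value functions, so the earlier single-DM propositions apply verbatim.

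Finally I would lift the pointwise implication through the quantifier over ${\cal D^{'}}$. Here the definition of $R_2'$ does exactly what is needed. If $R_2=N$, then $a\succsim^{R_1,N}_{(i,k),{\cal D^{'}}}b$ gives the pointwise relation for \emph{all} $d_h\in{\cal D^{'}}$; hence $a\succsim^{R_1'}_{(i_1,k_1),h}b$ for all $d_h$, which yields $a\succsim^{R_1',N}_{(i_1,k_1),{\cal D^{'}}}b$ — and a fortiori $a\succsim^{R_1',P}_{(i_1,k_1),{\cal D^{'}}}b$, since ``for all'' implies ``for at least one'' (${\cal D^{'}}$ being nonempty); this is precisely the case $R_2'\in\{N,P\}$. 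If $R_2=P$, then $a\succsim^{R_1,P}_{(i,k),{\cal D^{'}}}b$ gives the pointwise relation for \emph{at least one} $d_h\in{\cal D^{'}}$, and applying the pointwise implication to that particular $d_h$ gives $a\succsim^{R_1'}_{(i_1,k_1),h}b$, hence $a\succsim^{R_1',P}_{(i_1,k_1),{\cal D^{'}}}b$; this is the case $R_2'=P$. In both cases we have shown the desired inclusion.

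**Main obstacle.** There is no real obstacle — the proof is a bookkeeping exercise combining three earlier inclusions (index monotonicity in the $N$ mode, index monotonicity in the $P$ mode, and $N\subseteq P$) with the trivial logical fact ``$\forall\Rightarrow\exists$''. The only point requiring a moment's care is the interplay between the hypothesis being phrased at indices $(i,k)$ while the conclusion lives at $(i_1,k_1)$; I would handle this by remarking that the substantive instance of $\succsim^{R_1}_{(i,k)}\subseteq\succsim^{R_1'}_{(i,k)}$ is $R_1=N$, $R_1'=P$, which by part~\ref{inclus_nec_pos_n_point} holds at every index pair, so transporting it to $(i_1,k_1)$ is free. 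A secondary point worth stating explicitly is the standing assumption that ${\cal D^{'}}\neq\emptyset$, needed for the $\forall\Rightarrow\exists$ step in the $R_2=N$ case.
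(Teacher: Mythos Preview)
Your proposal is correct and follows essentially the same approach as the paper: unfold the group definition, establish the pointwise implication for each fixed DM by combining the mode inclusion $\succsim^{R_1}\subseteq\succsim^{R_1'}$ with the index monotonicity (parts~\ref{nec_inclusion}/\ref{pos_inclusion}), then lift through the universal or existential quantifier over ${\cal D^{'}}$. The only cosmetic difference is that the paper applies the mode inclusion first (at $(i,k)$, exactly as stated in the hypothesis) and then index monotonicity for $R_1'$, whereas you apply index monotonicity for $R_1$ first and then transport the mode inclusion to $(i_1,k_1)$ via part~\ref{inclus_nec_pos_n_point}; both orderings work and your extra remark about the transport is a legitimate way to handle it.
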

\begin{proof}
See Appendix.
\end{proof}

\noindent For example, considering a $2$-point interval decomposition and supposing that $a\succsim_{(1,1),{\cal D^{'}}}^{N,N}b$ holds, using this proposition we obtain: $a\succsim_{(1,1),{\cal D^{'}}}^{P,N}b$, $a\succsim_{(1,1),{\cal D^{'}}}^{N,P}b$, $a\succsim_{(1,1),{\cal D^{'}}}^{P,P}b$, $a\succsim_{(2,1),{\cal D^{'}}}^{N,N}b$, $a\succsim_{(2,1),{\cal D^{'}}}^{P,N}b$, $a\succsim_{(2,1),{\cal D^{'}}}^{N,P}b$ and $a\succsim_{(2,1),{\cal D^{'}}}^{P,P}b$.

Joining the weak, the strong and the normal preference relations in case of group decisions, we obtain the following proposition:

\begin{prop}\label{group_normal_index} For any $R_{1}\in\left\{P,N\right\},$ and for any ${\cal D^{'}}\subseteq{\cal D}:$
\begin{enumerate}
\item $\succsim_{\cal D^{'}}^{SN,R_1}\; \subseteq \; \succsim_{\cal D^{'}}^{N,R_1} \; \subseteq \; \succsim_{\cal D^{'}}^{WN,R_1}$, 
\item $\succsim_{\cal D^{'}}^{SP,R_1}\; \subseteq \; \succsim_{\cal D^{'}}^{P,R_1} \; \subseteq \; \succsim_{\cal D^{'}}^{WP,R_1}$.
\end{enumerate}
\end{prop}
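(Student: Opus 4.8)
The plan is to derive Proposition \ref{group_normal_index} directly from the single-DM inclusions already established in Proposition \ref{Prop_relations}, items \ref{N_incl} and \ref{P_incl}, namely $\succsim_{(1,n)}^{N}\subseteq\succsim^{N}\subseteq\succsim_{(n,1)}^{N}$ and $\succsim_{(1,n)}^{P}\subseteq\succsim^{P}\subseteq\succsim_{(n,1)}^{P}$, which in the notation of the Note read $\succsim^{SN}\subseteq\succsim^{N}\subseteq\succsim^{WN}$ and $\succsim^{SP}\subseteq\succsim^{P}\subseteq\succsim^{WP}$. The key observation is that these inclusions hold for \emph{every} DM $d_{h}$, i.e. $\succsim^{SN}_{h}\subseteq\succsim^{N}_{h}\subseteq\succsim^{WN}_{h}$ and $\succsim^{SP}_{h}\subseteq\succsim^{P}_{h}\subseteq\succsim^{WP}_{h}$ for each $d_{h}\in{\cal D^{'}}$. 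The group relations are built from the per-DM relations by either a universal quantifier over $d_{h}\in{\cal D^{'}}$ (the $N,\cdot$ and $\cdot,N$ versions) or an existential one (the $\cdot,P$ versions), and both quantifiers are monotone with respect to inclusion of the underlying per-DM relations.

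Concretely, I would argue as follows. Fix $R_{1}\in\{P,N\}$ and ${\cal D^{'}}\subseteq{\cal D}$, and suppose $a\succsim_{\cal D^{'}}^{SN,R_1}b$. By definition this means $a$ is $(1,n)$-necessarily preferred to $b$ for all $d_{h}\in{\cal D^{'}}$ if $R_{1}=N$, or for at least one $d_{h}\in{\cal D^{'}}$ if $R_{1}=P$; that is, $a\succsim^{SN}_{h}b$ for all (resp. for some) $d_{h}\in{\cal D^{'}}$. Applying the single-DM inclusion $\succsim^{SN}_{h}\subseteq\succsim^{N}_{h}$ to each such $d_{h}$, we get $a\succsim^{N}_{h}b$ for all (resp. for some) $d_{h}\in{\cal D^{'}}$, which is exactly $a\succsim_{\cal D^{'}}^{N,R_1}b$. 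The same monotonicity argument with $\succsim^{N}_{h}\subseteq\succsim^{WN}_{h}$ yields $\succsim_{\cal D^{'}}^{N,R_1}\subseteq\succsim_{\cal D^{'}}^{WN,R_1}$, proving item 1. Item 2 is identical, using $\succsim^{SP}_{h}\subseteq\succsim^{P}_{h}\subseteq\succsim^{WP}_{h}$ for each $d_{h}$ in place of the necessary chain.

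There is essentially no obstacle here beyond bookkeeping: the whole content is that a property holding (universally or existentially) over DMs is preserved when each DM's relation is enlarged, combined with the already-proved single-DM chains. In fact one can note that Proposition \ref{group_normal_index} is the special case of Proposition \ref{group_property} obtained by taking $(i,k)=(1,n)$ with $(i_1,k_1)=(1,n)$ and $R_{1}^{'}=R_{1}$, $R_{2}^{'}=R_{2}$ to get the first inclusion in each item, and $(i,k)=(1,n)$ with $(i_1,k_1)=(n,1)$ to pass from the normal to the weak relation—so an alternative one-line proof is simply to invoke Proposition \ref{group_property} together with items \ref{N_incl} and \ref{P_incl} of Proposition \ref{Prop_relations}. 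I would present the short direct quantifier argument as the main proof and mention the reduction to Proposition \ref{group_property} as a remark.
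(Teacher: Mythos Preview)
Your main argument is correct and is exactly the paper's approach: the paper's proof simply cites items \ref{N_incl} and \ref{P_incl} of Proposition \ref{Prop_relations}, and your quantifier-monotonicity argument is the natural unpacking of that citation.

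One small caveat on your closing remark: the reduction to Proposition \ref{group_property} does not quite go through as stated. That proposition only relates \emph{indexed} group relations $\succsim_{(i,k),{\cal D^{'}}}^{R_1,R_2}$ to $\succsim_{(i_1,k_1),{\cal D^{'}}}^{R_1^{'},R_2^{'}}$, whereas the middle terms $\succsim_{\cal D^{'}}^{N,R_1}$ and $\succsim_{\cal D^{'}}^{P,R_1}$ in Proposition \ref{group_normal_index} are the \emph{normal} (non-indexed) group relations. So Proposition \ref{group_property} alone cannot produce either inclusion in the chains; you genuinely need items \ref{N_incl} and \ref{P_incl} (which involve the normal relations) together with the per-DM lifting, as in your main argument. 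I would drop the alternative remark or rephrase it as ``the same quantifier-monotonicity principle underlying Proposition \ref{group_property}'' rather than a direct application of it.
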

\begin{proof}
See Appendix.
\end{proof}
Considering together results of Propositions \ref{Prop_relations}, \ref{group_property} and \ref{group_normal_index}, we obtain the dependencies presented in Figure \ref{Group_Imprecise}.

\begin{figure}[ht!]
\caption{Group preference relations in case of imprecise evaluations\label{Group_Imprecise}}
\begin{center}
\includegraphics[scale=0.47]{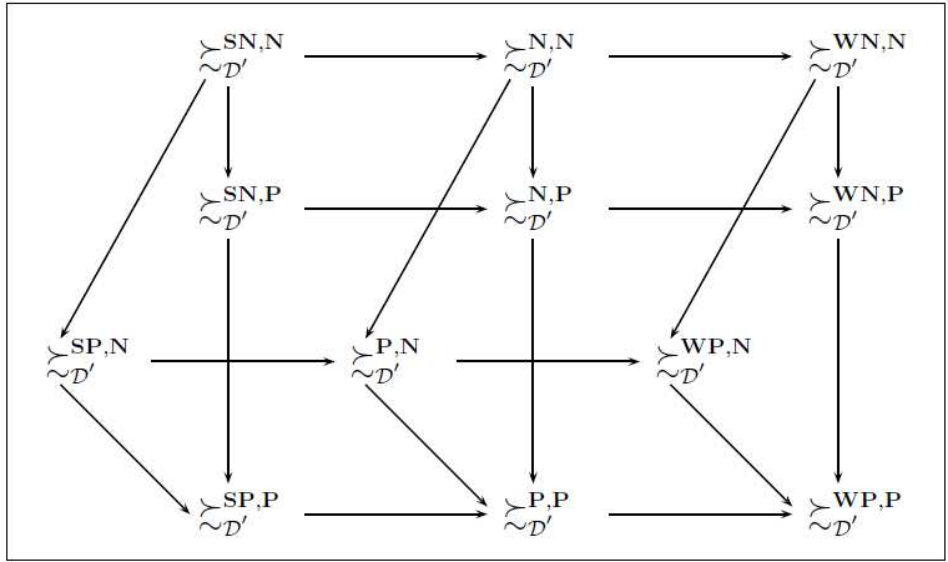}
\end{center}
\end{figure}

\vspace{0,2cm}
We have seen in Proposition \ref{Prop_relations} that for all $a,b\in A$ the following completeness property holds:

$$ a\succsim_{(i,k)}^{N}b \;\;\mbox{or}\;\; b\succsim_{(k,i)}^{P}a, \;\;\mbox{with}\;\;i,k\in\left\{1,\ldots,n\right\}.$$

In case of group decisions, the two following completeness properties hold:

\begin{prop}\label{Compl_prop}
For all $a,b\in A$, and for all $i,k\in\left\{1,\ldots,n\right\}$, we have:
\begin{enumerate}
\item $a\succsim_{(i,k),{\cal D^{'}}}^{N,N}b$ or $b\succsim_{(k,i),{\cal D^{'}}}^{P,P}a$,
\item $a\succsim_{(i,k),{\cal D^{'}}}^{N,P}b$ or $b\succsim_{(k,i),{\cal D^{'}}}^{P,N}a$.
\end{enumerate}
\end{prop}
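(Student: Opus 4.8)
The plan is to derive both claims directly from the single–decision‑maker completeness property of Proposition~\ref{Prop_relations}(\ref{imprec_completeness}), which asserts that for a fixed set of compatible value functions, for each pair $a,b\in A$ and each $i,k\in\left\{1,\ldots,n\right\}$, either $a\succsim_{(i,k)}^{N}b$ or $b\succsim_{(k,i)}^{P}a$ holds. Applied separately to each DM $d_{h}\in{\cal D^{'}}$ (with that DM's own set of compatible value functions), this gives, for every $d_{h}$, that $a$ is $(i,k)$‑necessarily preferred to $b$ or $b$ is $(k,i)$‑possibly preferred to $a$. The only other ingredient is the definition of the four group relations $\succsim^{N,N}_{(i,k),{\cal D^{'}}}$, $\succsim^{N,P}_{(i,k),{\cal D^{'}}}$, $\succsim^{P,N}_{(i,k),{\cal D^{'}}}$, $\succsim^{P,P}_{(i,k),{\cal D^{'}}}$ as, respectively, ``$(i,k)$‑necessarily for all $d_{h}$'', ``$(i,k)$‑necessarily for some $d_{h}$'', ``$(i,k)$‑possibly for all $d_{h}$'', ``$(i,k)$‑possibly for some $d_{h}$''. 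Both parts then follow by an argument that simply unwinds these quantifiers.

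For part 1, I would assume that $a\succsim^{N,N}_{(i,k),{\cal D^{'}}}b$ fails. By definition this produces a witness DM $d_{h}\in{\cal D^{'}}$ for which $a$ is not $(i,k)$‑necessarily preferred to $b$. Applying the per‑DM completeness property to exactly this $d_{h}$ then yields that $b$ is $(k,i)$‑possibly preferred to $a$ for $d_{h}$, hence for at least one DM of ${\cal D^{'}}$, i.e. $b\succsim^{P,P}_{(k,i),{\cal D^{'}}}a$. Thus one of the two relations always holds.

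For part 2 the argument is dual: I would assume that $b\succsim^{P,N}_{(k,i),{\cal D^{'}}}a$ fails, which by definition yields a DM $d_{h}\in{\cal D^{'}}$ for which $b$ is not $(k,i)$‑possibly preferred to $a$; the same per‑DM completeness property then forces $a$ to be $(i,k)$‑necessarily preferred to $b$ for that $d_{h}$, hence for at least one DM, i.e. $a\succsim^{N,P}_{(i,k),{\cal D^{'}}}b$. The asymmetry between the two parts is entirely due to which side carries the universal ``for all $d_{h}$'' quantifier and which carries the existential ``for some $d_{h}$''.

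There is no real difficulty to anticipate here; the statement is a bookkeeping consequence of a result already proved. The one thing to watch is to keep the index transposition $(i,k)\leftrightarrow(k,i)$ and the pairing of the ``necessary''/``possible'' sides consistent with the single‑DM statement throughout, and to note that the reduction uses the per‑DM property only for the single witness DM produced by the negated hypothesis, so no uniformity over ${\cal D^{'}}$ is required.
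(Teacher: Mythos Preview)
Your proposal is correct and follows essentially the same approach as the paper: both parts are derived from the single-DM completeness property (Proposition~\ref{Prop_relations}(\ref{imprec_completeness})) by unwinding the ``for all $d_h$'' / ``for some $d_h$'' quantifiers in the group relations. The only cosmetic difference is that for part~2 the paper negates the first disjunct $a\succsim_{(i,k),{\cal D^{'}}}^{N,P}b$ (obtaining ``for all $d_h$, $a\not\succsim_{(i,k)}^{N}b$'') rather than the second, but this is simply the contrapositive of your argument and uses the same per-DM lemma.
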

\begin{proof}
See Appendix.
\end{proof}

For example, in Figure \ref{Group_Imprecise} we observe the following completeness properties, for all $a,b\in A$:
\begin{itemize}
\item $a\succsim^{SN,N}_{\cal D^{'}}b$ or $b\succsim^{WP,P}_{\cal D^{'}}a$,
\item $a\succsim^{N,N}_{\cal D^{'}}b$ or $b\succsim^{P,P}_{\cal D^{'}}a$,
\item $a\succsim^{WN,N}_{\cal D^{'}}b$ or $b\succsim^{SP,P}_{\cal D^{'}}a$,
\item $a\succsim^{SN,P}_{\cal D^{'}}b$ or $b\succsim^{WP,N}_{\cal D^{'}}a$,
\item $a\succsim^{N,P}_{\cal D^{'}}b$ or $b\succsim^{P,N}_{\cal D^{'}}a$,
\item $a\succsim^{WN,P}_{\cal D^{'}}b$ or $b\succsim^{SP,N}_{\cal D^{'}}a$.
\end{itemize}

\begin{prop}\label{group_transitivity}
Given $a,b,c\in A$, $i,k,i_1,k_1\in\left\{1,\ldots,n\right\}$, $R_1,R_2\in\left\{P,N\right\}$ and ${\cal D^{'}}\subseteq{\cal D}$ such that $k\geq i_1$:
\begin{enumerate}
\item if $a\succsim_{(i,k),{\cal D^{'}}}^{N,R_1}b,$ $b\succsim_{(i_1,k_1),{\cal D^{'}}}^{N,R_2}c$, then $a\succsim_{(r,s),{\cal D^{'}}}^{N,\overline{R}}c,$ 
\item if $a\succsim_{(i,k),{\cal D^{'}}}^{N,R_1}b,$ $b\succsim_{(i_1,k_1),{\cal D^{'}}}^{P,R_2}c$, then $a\succsim_{(r,s),{\cal D^{'}}}^{P,\overline{R}}c,$ 
\item if $a\succsim_{(i,k),{\cal D^{'}}}^{P,R_1}b,$ $b\succsim_{(i_1,k_1),{\cal D^{'}}}^{N,R_2}c$, then $a\succsim_{(r,s),{\cal D^{'}}}^{P,\overline{R}}c,$ 
\end{enumerate}
where $r,s\in\left\{1,\ldots,n\right\}$ such that $r\geq i$, $s\leq k_1$ and $\overline{R}=
\left\{
\begin{array}{lll}
R_1 & \mbox{if} & R_1=R_2=N,\\
P   & \mbox{if} & R_1\neq R_2.
\end{array}
\right.
$
\end{prop}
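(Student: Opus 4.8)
The plan is to reduce Proposition \ref{group_transitivity} to the corresponding single-DM transitivity results already established in Proposition \ref{Prop_relations} (namely items \ref{nec_transitivity}, \ref{necpos_transitivity} and \ref{posnec_transitivity}), applied DM by DM, and then to handle the quantifiers ``for all $d_h$'' versus ``for at least one $d_h$'' by a short logical argument. The three cases are structurally identical, so I would prove case 1 in detail and indicate the obvious modifications for cases 2 and 3.

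First I would fix $a,b,c\in A$, indices $i,k,i_1,k_1$ with $k\geq i_1$, and $r,s$ with $r\geq i$, $s\leq k_1$. For case 1, assume $a\succsim_{(i,k),{\cal D^{'}}}^{N,R_1}b$ and $b\succsim_{(i_1,k_1),{\cal D^{'}}}^{N,R_2}c$. By the definition of the group relations, the first hypothesis means: the set of DMs $d_h\in{\cal D^{'}}$ for which $a\succsim_{(i,k),h}^{N}b$ holds is \emph{all} of ${\cal D^{'}}$ if $R_1=N$, and is \emph{nonempty} if $R_1=P$; analogously for the second hypothesis with $R_2$. I would then split on the value of $\overline R$. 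If $R_1=R_2=N$, then for \emph{every} $d_h\in{\cal D^{'}}$ we have both $a\succsim_{(i,k),h}^{N}b$ and $b\succsim_{(i_1,k_1),h}^{N}c$, so by Proposition \ref{Prop_relations}.\ref{nec_transitivity} applied to DM $d_h$ we get $a\succsim_{(r,s),h}^{N}c$; since this holds for every $d_h\in{\cal D^{'}}$, we conclude $a\succsim_{(r,s),{\cal D^{'}}}^{N,N}c$, which is exactly $a\succsim_{(r,s),{\cal D^{'}}}^{N,\overline R}c$.

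If instead $R_1\neq R_2$ (so $\overline R=P$), then one of the two hypotheses is a ``for all'' statement and the other is a ``for at least one'' statement. Say $R_1=N$, $R_2=P$ (the symmetric subcase $R_1=P$, $R_2=N$ is identical with the roles of the two hypotheses swapped, again using Proposition \ref{Prop_relations}.\ref{nec_transitivity}). Pick a witness $d_h\in{\cal D^{'}}$ with $b\succsim_{(i_1,k_1),h}^{N}c$; since $R_1=N$ we also have $a\succsim_{(i,k),h}^{N}b$ for that same $d_h$. Applying Proposition \ref{Prop_relations}.\ref{nec_transitivity} to $d_h$ yields $a\succsim_{(r,s),h}^{N}c$, hence in particular $a\succsim_{(r,s),h}^{P}c$ by Proposition \ref{Prop_relations}.\ref{inclus_nec_pos_n_point}, so there exists a DM in ${\cal D^{'}}$ for which the $(r,s)$-possible relation holds, i.e. $a\succsim_{(r,s),{\cal D^{'}}}^{P,P}c=a\succsim_{(r,s),{\cal D^{'}}}^{P,\overline R}c$. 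For cases 2 and 3 the only change is to invoke Proposition \ref{Prop_relations}.\ref{necpos_transitivity} (resp. \ref{posnec_transitivity}) instead of \ref{nec_transitivity} at the per-DM level; the quantifier bookkeeping on the second superscript component is verbatim the same.

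The routine part is the quantifier bookkeeping; the only point requiring a little care — and the place I would be most careful — is making sure that in the ``$R_1\neq R_2$'' subcase we extract the witness DM from the \emph{possible}-type hypothesis and reuse that \emph{same} DM in the universally quantified hypothesis before invoking the single-DM transitivity result, since choosing different DMs for the two premises would not let us chain them. Once that is observed, the conclusion on the first superscript (it is $N$ when both per-DM conclusions are necessary, and $P$ as soon as a possible relation enters) matches the pattern of the single-DM propositions, and the conclusion on the second superscript ($\overline R$) is exactly what the ``for all / for at least one'' combinatorics forces.
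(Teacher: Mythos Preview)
Your overall plan is exactly the paper's: reduce to the single-DM transitivity items \ref{nec_transitivity}, \ref{necpos_transitivity}, \ref{posnec_transitivity} of Proposition \ref{Prop_relations}, and then sort out the ``for all / for at least one'' quantifiers case by case. The argument for $R_1=R_2=N$ is fine and matches the paper verbatim.

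There is, however, a slip in your treatment of case 1 when $R_1\neq R_2$. After picking the witness $d_h$ and applying Proposition \ref{Prop_relations}.\ref{nec_transitivity}, you correctly obtain $a\succsim_{(r,s),h}^{N}c$. At that point you are done: this says exactly that there is a DM in ${\cal D^{'}}$ for whom the $(r,s)$-\emph{necessary} relation holds, i.e.\ $a\succsim_{(r,s),{\cal D^{'}}}^{N,P}c$, which is precisely the required conclusion $a\succsim_{(r,s),{\cal D^{'}}}^{N,\overline R}c$ of case 1. Instead you gratuitously weaken to $a\succsim_{(r,s),h}^{P}c$ via Proposition \ref{Prop_relations}.\ref{inclus_nec_pos_n_point} and conclude $a\succsim_{(r,s),{\cal D^{'}}}^{P,P}c$, which is strictly weaker (by Proposition \ref{group_property}) and is \emph{not} what case 1 asserts: the first superscript in case 1 must remain $N$. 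Simply delete the weakening step and read off $\succsim^{N,P}_{(r,s),{\cal D^{'}}}$ directly from $a\succsim_{(r,s),h}^{N}c$; the paper does exactly this. Your handling of cases 2 and 3 via items \ref{necpos_transitivity} and \ref{posnec_transitivity} is correct as stated.
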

\begin{proof}
See Appendix.
\end{proof}
Let us note that in the above proposition, the property of transitivity does not hold when the two relations in the condition part are possibly true for at least one DM, i.e. when $a\succsim^{R_1}_{(i,k)}b$ is true for at least one DM and $b\succsim^{R_1}_{(i_1,k_1)}c$ is true for at least one DM. In fact, in this case, we cannot conclude that there exists at least one DM for whom $a\succsim^{R_1}_{(i,k)}c$, because the DMs of the first and of the second relations could be different.

\section{Further extensions of ROR in case of imprecise evaluations}\label{Fur_ext}

\noindent \textbf{Inconsistency}\\
We have seen in section \ref{ROR}, that the first step of ROR is to check if there exists at least one value function compatible with the preference information provided by the DM. In fact, it is possible that the information provided by the DM has some inconsistencies that do not permit to find a compatible value function. In this case, the DM, together with the analyst, can decide to continue the study while accepting the existence of these inconsistencies, or look for a minimal set of constraints responsible for this infeasibility, and remove them from the linear program. The procedures used to find a minimal set of constraints responsible for the infeasibility can be found in \cite{mousseau2003resolving}.

\vspace{0,5cm}
\noindent \textbf{Credibility}\\
ROR methods permit to elicit incrementally the preferences by the DM, assigning them a different degree of credibility. The idea of considering a sequence of preference information pieces ordered according to their credibility has been introduced in \cite{greco2008ordinal}. More formally, the preference information given by the DM is represented by a chain of embedded preference relations  $\succsim_{1}\subseteq\ldots\subseteq\succsim_{h}$, where for all $r,s=1,\ldots,h,$ with $r<s$, the preference $\succsim_r$ is more credible than $\succsim_s$. If for any $t=1,\ldots,h$, we denote by $E_{t}$ the set of constraints obtained from $\succsim_t$, and by ${\cal U}_t$ the sets of value functions compatible with the preference information of $\succsim_{t},$ then we have $E_1\subseteq\ldots\subseteq E_h$ and ${\cal U}_1\supseteq\ldots\supseteq{\cal U}_h$, and consequently $\succsim_{1}^{N}\subseteq\ldots\subseteq\succsim_h^{N}$, and $\succsim_{1}^{P}\supseteq\ldots\supseteq\succsim_{h}^{P},$ that is the smaller the credibility of the considered preference relation $\succsim_{t}$, the richer the necessary preference relation $\succsim^N_t$ and the poorer the possible preference relation $\succsim^P_t$. In case of imprecise evaluations, considering the same $E_{t}$ and ${\cal U}_t$, we will have for all $i,k\in\left\{1,\ldots,n\right\}$, $\succsim_{(i,k),1}^{N}\subseteq\ldots\subseteq\succsim_{(i,k),h}^{N}$, and $\succsim_{(i,k),1}^{P}\supseteq\ldots\supseteq\succsim_{(i,k),h}^{P},$ which means that also in this case the smaller the credibility of the considered preference relation $\succsim_t$, the richer the necessary preference relation $\succsim^{N}_{(i,k),t}$, and the poorer the possible preference relation $\succsim^{P}_{(i,k),t}.$

\vspace{0,5cm}
\noindent \textbf{Extreme ranking analysis}\\
Necessary and possible preference relations give information regarding pairs of alternatives. However, it is also interesting to analyse information related to the whole set of alternatives in terms of the best and the worst ranking position assigned to each alternative by the compatible value functions. This constitutes the extreme ranking analysis introduced in \cite{Promethee2010}. In case of $n$-point imprecise evaluations, the extreme ranking analysis can be performed for each preference relation considered. That is, for all $i,k\in\left\{1,\ldots,n\right\}$ we could find a ranking using the necessary preference relation $\succsim_{(i,k)}^{N}$ and the possible preference relation $\succsim_{(i,k)}^{P}$.

\vspace{0,5cm}
\noindent \textbf{Sorting problem}\\
Ranking and choice problems are based on pairwise comparisons of alternatives and therefore they can be dealt with possible and necessary preference relations. Sorting relies instead on the alternative's intrinsic value and not on the comparison to others. Therefore, sorting problems need specific methods. Within ROR, UTADIS$^{GMS}$ \cite{greco2010multiple} has been proposed to deal with sorting problem as follows. Given a set of pre-defined classes $C_1,C_2,\ldots,C_p$ ordered from the worst to the best, the DM gives preference information in terms of exemplary assignments of reference alternatives to intervals of considered classes, such that $a_{\ast}\rightarrow\left[C_{L^{DM}}(a_{\ast}),C_{R^{DM}}(a_{\ast})\right]$, with $L^{DM}\leq R^{DM}$, means that reference alternative $a_{\ast}$ can be assigned to one of the classes between $C_{L^{DM}}(a_{\ast})$ and $C_{R^{DM}}(a_{\ast})$. Denoting by  $A^{R}\subseteq A$ the set of reference alternatives assigned by the DM, we say that a value function $U$ is compatible if 
\begin{equation}\label{CVF}
\forall a_{\ast},b_{\ast}\in A^{R}, L^{DM}(a_{\ast})>R^{DM}(b_{\ast})\Rightarrow U(a_{\ast})>U(b_{\ast}).
\end{equation}

Denoting by ${\cal U}$ the set of compatible value functions, we have that each $U\in{\cal U}$ assigns an alternative $a\in A$ to an interval of classes $\left[L^{U}(a),R^{U}(a)\right]$ where

$$
L^{U}(a)=\max\left(\left\{1\right\}\cup\left\{L^{DM}(a_{\ast}): U(a_{\ast})\leq U(a),a_{\ast}\in A^{R}\right\}\right),
$$

$$
R^{U}(a)=\min\left(\left\{p\right\}\cup\left\{R^{DM}(a_{\ast}): U(a_{\ast})\geq U(a),a_{\ast}\in A^{R}\right\}\right).
$$

Within ROR, considering the whole set of compatible value functions, for each $a\in A$ one can define the possible assignment $C^{P}(a)$ and the necessary assignment $C^{N}(a)$ as follows: 
\begin{itemize}
\item $C^{P}(a)=\left[L^{\cal U}_{P}(a),R^{\cal U}_{P}(a)\right]=\underset{U\in{\cal U}}{\bigcup}\left[L^{U}(a),R^{U}(a)\right]$,
\item $C^{N}(a)=\left[L^{\cal U}_{N}(a),R^{\cal U}_{N}(a)\right]=\underset{U\in{\cal U}}{\bigcap}\left[L^{U}(a),R^{U}(a)\right]$.
\end{itemize}

\noindent In case of imprecise evaluations, for each compatible value function $U\in{\cal U}$, and for all $i,k\in\left\{1,\ldots,n\right\}$ we consider: 

$$
L^{U}_{(i,k)}(a)=\max\left(\left\{1\right\}\cup\left\{L^{DM}(a_{\ast}): U(a^{(i)})\geq U(a_{\ast}^{(k)}),a_{\ast}\in A^{R}\right\}\right)
$$

$$
R^{U}_{(i,k)}(a)=\min\left(\left\{p\right\}\cup\left\{R^{DM}(a_{\ast}): U(a^{(i)})\leq U(a_{\ast}^{(k)}),a_{\ast}\in A^{R}\right\}\right).
$$

\begin{itemize}
\item $C^{P}_{(i,k)}(a)=\left[L^{\cal U}_{(i,k)}(a),R^{\cal U}_{(i,k)}(a)\right]=\underset{U\in{\cal U}}{\bigcup}\left[L_{(i,k)}^{U}(a),R_{(i,k)}^{U}(a)\right]$,
\item $C^{N}_{(i,k)}(a)=\left[L^{\cal U}_{(i,k)}(a),R^{\cal U}_{(i,k)}(a)\right]=\underset{U\in{\cal U}}{\bigcap}\left[L_{(i,k)}^{U}(a),R_{(i,k)}^{U}(a)\right]$.
\end{itemize}

\noindent In this way, we can check to which classes each alternative $a$ can be assigned for each couple $(i,k)$, where $i,k\in\left\{1,\ldots,n\right\}$.

\vspace{0,5cm}
\noindent \textbf{Interacting criteria}\\
UTA$^{GMS}$, UTADIS$^{GMS}$ and GRIP use an additive value function as preference model. This model is among the most popular ones because it has the advantage of being easily manageable, and, moreover, it has a very sound axiomatic basis (see, e.g., \cite{Keeney76,wakker1989additive}). However, the additive value function is not able to represent \emph{interactions} among criteria. For example, consider evaluation of cars using such criteria as maximum speed, acceleration and price. In this case, there may exist a negative interaction (\emph{negative synergy}) between maximum speed and acceleration
because a car with a high maximum speed also has a good acceleration, so, even if each of these two criteria is very
important for a DM who likes sport cars, their joint impact on reinforcement of preference of a more speedy and better accelerating
car over a less speedy and worse accelerating car will be smaller than a simple addition of the two impacts corresponding to each of
the two criteria considered separately in validation of this preference relation. In the same decision problem, there may exist a
positive interaction (\emph{positive synergy}) between maximum speed and price because a car with a high maximum speed usually also has a
high price, and thus a car with a high maximum speed and relatively low price is very much appreciated. Thus, the comprehensive impact
of these two criteria on the strength of preference of a more speedy and cheaper car over a less speedy and more expensive car is greater
than the impact of the two criteria considered separately in validation of this preference relation. To handle the interactions among criteria, one can consider \textit{non-additive integrals}, such as Choquet integral \cite{choquet1953theory} and Sugeno integral \cite{sugeno1974theory} (for a comprehensive survey on the use of non-additive integrals in MCDA see \cite{grabisch2000fuzzy}). Robust ordinal regression has also been applied to the Choquet integral 
\cite{angilella2010non}. The non-additive integrals suffer, however, from  limitations within MCDA (see \cite{roy-rairo-2009,roy-rairo-2007}); in particular, they need that the evaluations of alternatives on all criteria are expressed on the same scale. This means that in order to apply a non-additive integral it is
necessary, for example, to estimate if the maximum speed of 200 km/h is worth the price of 35,000\euro{}. To deal with this problem a new robust ordinal regression method has been proposed: $UTA^{GMS}$--\emph{INT} \cite{greco2008ordinal}. The preference model used by $UTA^{GMS}$--\emph{INT} is a general additive value function augmented by two components corresponding to ``bonus'' and ``penalty'' values for pairs of positively or negatively interacting criteria, respectively. When calculating the value of a particular alternative, a bonus is added to the additive component of the value function if a given pair of criteria is in a positive synergy for evaluations of this alternative on the two criteria, or the penalty is subtracted from the additive component of the value function if a given pair of criteria is in a negative synergy for evaluations of this alternative on the two criteria. The specific formulation of the value function in $UTA^{GMS}$--\emph{INT} permits to deal with criteria having heterogeneous scales, without encoding evaluations of alternatives on a common scale. 

\vspace{0,5cm}
\noindent \textbf{Methodological extensions}\\
In this paper we have shown how to compute for all $i,k\in\left\{1,\ldots,n\right\}$, and for each pair of alternatives $a,b\in A$, the dominance relation $a\Delta^{(i,k)}b$ and the preference relation $a\succsim^{(i,k)}b$, using the same $i$-th indicator for all criteria when considering alternative $a$, and the same $k$-th indicator for all criteria when considering alternative $b$. A possible extension of the described method, could consist in taking into account different indicators for each criterion; for example, one could consider for each alternative indicator $g_{j}^{i}$ for criterion $g_{j}$, indicator $g_{h}^{k}$ for criterion $g_{h}$, and so on, defining other dominance and preference relations, as well as other fictitious alternatives involving evaluations on the selected indicators. For example, let us suppose that evaluations of alternatives $a$ and $b$ are as shown in Table \ref{eval}:

\begin{table}[!ht]
\caption{Evaluation table of alternatives $a$ and $b$}\label{eval}
\begin{center}
\begin{tabular}{|cccc|}
\hline
\rule[-3mm]{-1mm}{0.8cm}
alternative & $g_{1}(\cdot)$ & $g_{2}(\cdot)$ & $g_{3}(\cdot)$ \\
\hline
\rule[-3mm]{-1mm}{0.8cm}
$a$ & [10,15,17] & [25,40,50] & [23,47,56] \\
\rule[-3mm]{-1mm}{0.8cm}
$b$ & [11,23,45] & [5,20,65] & [2,82,90] \\
\hline
\end{tabular}
\end{center}
\end{table}

\noindent In this case, if one would like to consider dominance and preference relations with respect to indicator $g_{1}^{(3)}$ for criterion $g_{1}$, indicator $g_{2}^{(1)}$ for criterion $g_{2}$ and indicator $g_{3}^{(2)}$ for criterion $g_{3}$, then the fictitious alternatives $\underline{a}$ and $\underline{b}$ would get evaluations as shown in Table \ref{fict_alt_1}:

\begin{table}[!ht]
\caption{Evaluation table of fictitious alternatives $\overline{a}$ and $\overline{b}$}\label{fict_alt_1}
\begin{center}
\begin{tabular}{|cccc|}
\hline
\rule[-3mm]{-1mm}{0.8cm}
alternative & $g_{1}(\cdot)$ & $g_{2}(\cdot)$ & $g_{3}(\cdot)$ \\
\hline
\rule[-3mm]{-1mm}{0.8cm}
$\underline{a}$ & [17,17,17] & [25,25,25] & [47,47,47] \\
\rule[-3mm]{-1mm}{0.8cm}
$\underline{b}$ & [45,45,45] & [5,5,5] & [82,82,82] \\
\hline
\end{tabular}
\end{center}
\end{table}

\noindent Another meaningful extension regards missing evaluations. For example, supposing that alternative $a$ has missing evaluations on criterion $g$, one can consider for this criterion a $2$-point interval $g(a)=\left[\alpha_{g},\beta_{g}\right]$, where $\alpha_{g}$ and $\beta_{g}$ are respectively the worst and the best values an alternative could have on criterion $g$. In this context, we could only consider the weak and the strong dominance and preference relations, reducing also the other criteria to $2$-point intervals. For example, suppose that evaluations of alternatives $a$ and $b$ are as shown in Table \ref{miss_eval}:

\begin{table}[!ht]
\caption{Evaluation table of alternatives $a$ and $b$ in case of missing evaluations}\label{miss_eval}
\begin{center}
\begin{tabular}{|cccc|}
\hline
\rule[-3mm]{-1mm}{0.8cm}
alternative & $g_1(\cdot)$ & $g_2(\cdot)$ & $g_3(\cdot)$\\
\hline
\rule[-3mm]{-1mm}{0.8cm}
$a$ & [10,15,17] & [25,40,50] & $\cdot$ \\
\rule[-3mm]{-1mm}{0.8cm}
$b$ & [11,23,45] & $\cdot$ & [2,82,90] \\
\hline
\end{tabular}
\end{center}
\end{table}

\noindent Alternative $a$ miss evaluation on criterion $g_{3}$, and $b$ miss evaluation on criterion $g_{2}.$ In this case, supposing that the range of evaluations on criterion $g_{2}$ is $\left[10,70\right]$, while the range of evaluations on criterion $g_{3}$ is $\left[1,100\right]$, we will consider the fictitious alternatives $a^{(W)}$, $a^{(B)}$, $b^{(W)}$ and $b^{(B)}$ representing the worst and the best realizations of alternatives $a$ and $b$ respectively, having $2$-point interval evaluations as shown in Table \ref{miss_eval_2}:

\begin{table}[!ht]
\caption{Evaluation table of fictitious alternatives $a^{(W)}$, $a^{(B)}$, $b^{(W)}$, $b^{(B)}$}\label{miss_eval_2}
\begin{center}
\begin{tabular}{|cccc|}
\hline
\rule[-3mm]{-1mm}{0.8cm}
alternative & $g_1(\cdot)$ & $g_2(\cdot)$ & $g_3(\cdot)$\\
\hline
\rule[-3mm]{-1mm}{0.8cm}
$a^{(W)}$ & [10,10] & [25,25] & [1,1] \\
\rule[-3mm]{-1mm}{0.8cm}
$a^{(B)}$ & [17,17] & [50,50] & [100,100] \\
\rule[-3mm]{-1mm}{0.8cm}
$b^{(W)}$ & [11,11] & [10,10] & [2,2] \\
\rule[-3mm]{-1mm}{0.8cm}
$b^{(B)}$ & [45,45] & [70,70] & [90,90] \\
\hline
\end{tabular}
\end{center}
\end{table}

\noindent Consequently, all known evaluations of $a$ and $b$, which are characterized by $3$-point intervals, are reduced to $2$-point intervals when we consider the weak and the strong dominance and preference relations for the pair $(a,b)$.

\section{A didactic example}
In this didactic example, for the sake of simplicity, we consider only the ``classic'' preference relations $\succsim^{N}$ and $\succsim^{P}$, the ``strong'' preference relations $\succsim^{SN}$ and $\succsim^{SP}$ and the ``weak'' preference relations $\succsim^{WN}$ and $\succsim^{WP}$. Let us suppose a High School has to give a scholarship; for this reason, the Dean has to choose a laureate among 10 students being the best in particular classes of the school. In order to cope with this problem, the Dean decides to use a multicriteria approach, considering each student evaluated on three subjects: Mathematics (Mat), Physics (Phy) and Computer Science (Com). Each subject is thus an evaluation criterion with an ordinal scale constituted from five levels ordered from the worst to the best: ``Very Bad, Bad, Medium, Good and Very Good.'' Differently from the last years, the Dean has a new problem, because some students have imprecise evaluations on some criteria. The students' evaluations are shown in Table \ref{EVALUATIONS}.

\begin{table}[!htp]
\caption{Evaluations of students on three criteria}\label{EVALUATIONS}
\begin{center}
\resizebox{10cm}{!}{
\begin{tabular}{c|ccc}
student$\backslash$subject & Mat & Phy & Com  \\
\hline
$\mathbf{A}$ & Medium & Very Good & Very Good \\ 
$\mathbf{B}$ & [Good,Very Good] & [Very Bad,Medium] & [Bad,Good] \\
$\mathbf{C}$ & [Bad,Very Good] & Good & [Medium,Good] \\
$\mathbf{D}$ & [Good,Very Good] & [Medium,Good] & [Medium,Good] \\
$\mathbf{E}$ & Very Good & [Very Bad,Good] & [Medium,Good] \\
$\mathbf{F}$ & [Very Bad,Good] & [Bad,Medium] & [Bad,Medium] \\
$\mathbf{H}$ & [Medium,Good] & [Medium,Good] & [Medium,Good] \\
$\mathbf{I}$ & Very Good & [Medium,Very Good] & Bad \\
$\mathbf{L}$ & [Very Bad,Bad] & [Bad,Medium] & [Very Bad,Medium]  \\
$\mathbf{M}$ & [Very Bad,Bad] & [Good,Very Good] & Very Good \\
\end{tabular}
}
\end{center}
\end{table}

In Table $\ref{EVALUATIONS}$, we see that $\mathbf{A}$ is the only alternative having precise evaluations on all criteria, while all other alternatives have imprecise evaluations; for example Mat($\mathbf{F}$)=[Very Bad,Good] means that alternative $\mathbf{B}$ can assume the following evaluations ``Very Bad, Bad, Medium and Good'' on Mathematics, that is the only evaluation he certainly will not have is ``Very Good.'' From Table \ref{EVALUATIONS} and Definition $\ref{dominance}$, we obtain Figures \ref{Weak_Dom}, \ref{Norm_Dom} and \ref{Strong_Dom} showing the weak, normal and strong dominance relations, respectively.

\begin{figure}[ht!]
\begin{center}
\includegraphics[scale=0.6]{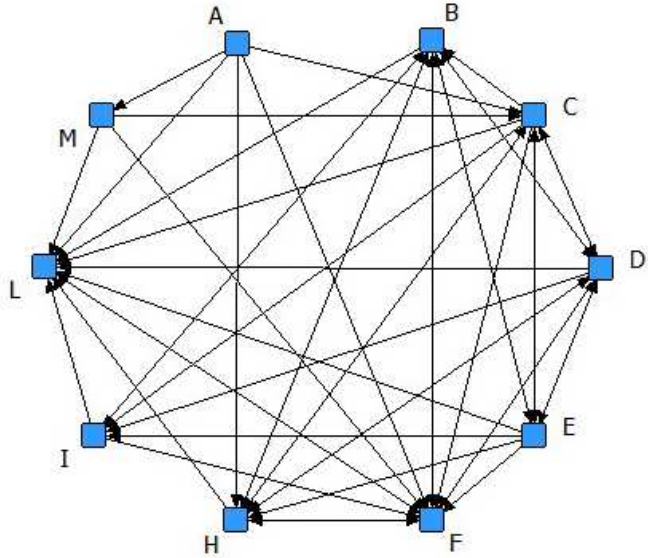}
\end{center}
\caption{Weak dominance relation in the set of students\label{Weak_Dom}}
\end{figure}

\begin{figure}[ht!]
\begin{center}
\includegraphics[scale=0.6]{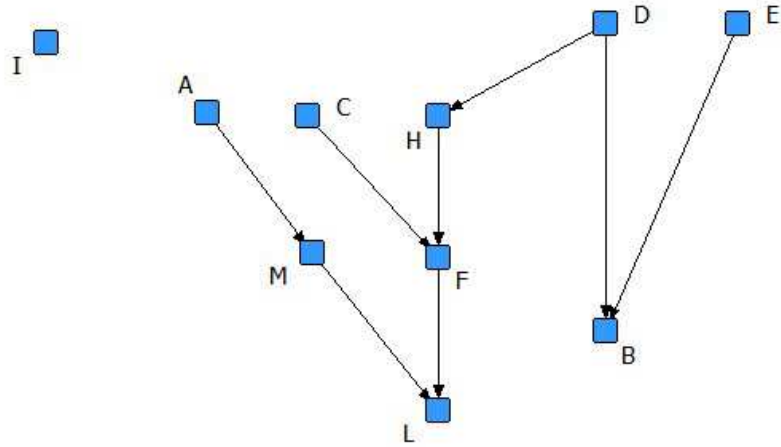}
\end{center}
\caption{Classic dominance relation in the set of students\label{Norm_Dom}}
\end{figure}

\begin{figure}[ht!]
\begin{center}
\includegraphics[scale=0.6]{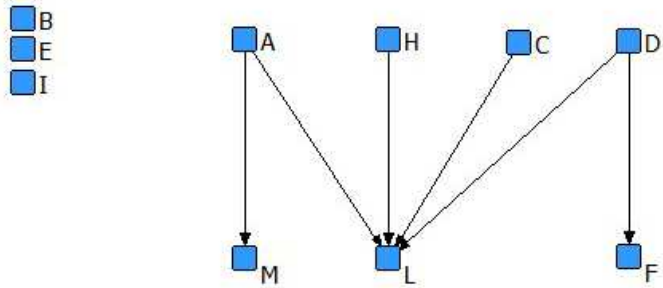}
\end{center}
\caption{Strong dominance relation in the set of students\label{Strong_Dom}}
\end{figure}

The Dean provides, moreover, preference information regarding few alternatives he is confident on. 
\begin{itemize}
\item At first, he says that ``student $\mathbf{M}$ is preferred to student $\mathbf{D}$''; this information is translated into the constraint $U(\mathbf{M})>U(\mathbf{D})$ that we shall call $C_{1}$; adding $C_{1}$ to the linear programming constraints $\left(E^{A^{R'}}\right)$, and after solving the corresponding optimization problems, we obtain preference relations shown in Figures \ref{Nor_FPI} and \ref{Strong_FPI}; in these  Figures, thick red arrows represent new information.

\begin{figure}[ht!]
\begin{center}
\includegraphics[scale=0.6]{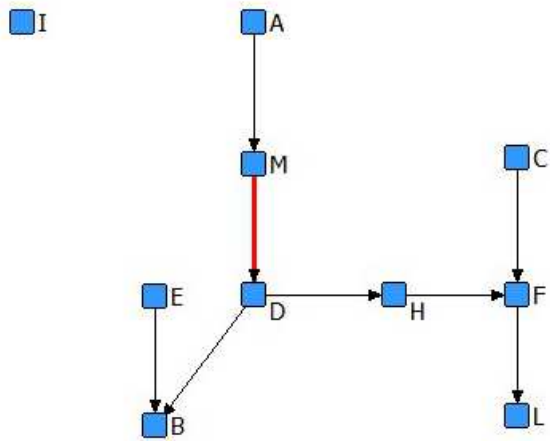}
\end{center}
\caption{Classic necessary preference relation obtained for the first piece of preference information\label{Nor_FPI}}
\end{figure}

\begin{figure}[ht!]
\begin{center}
\includegraphics[scale=0.65]{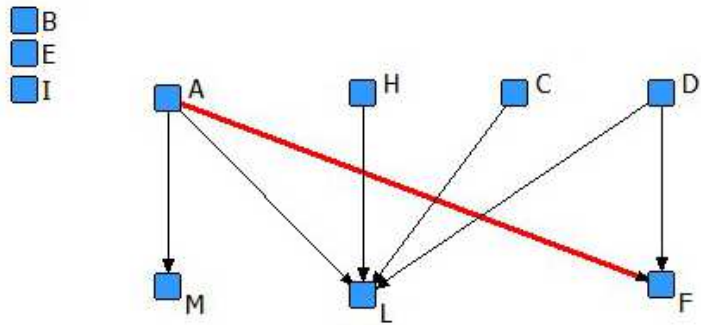}
\end{center}
\caption{Strong necessary preference relation obtained for the first piece of preference information\label{Strong_FPI}}
\end{figure}

\item Then, the Dean gives information expressing intensities of preference: ``Student $\mathbf{M}$ is preferred to student $\mathbf{I}$ more than student $\mathbf{C}$ is preferred to student $\mathbf{H}$''; adding the constraint $U(\mathbf{M})-U(\mathbf{I})>U(\mathbf{C})-U(\mathbf{H})$ (that we shall call $C_{2}$) to the linear programming constraints $\left(E^{A^{R'}}\right)$ and $C_{1}$, and solving the corresponding optimization problems, we obtain the preference relations shown in Figures \ref{Nor_SPI} and \ref{Strong_SPI}.

\begin{figure}[ht!]
\begin{center}
\includegraphics[scale=0.6]{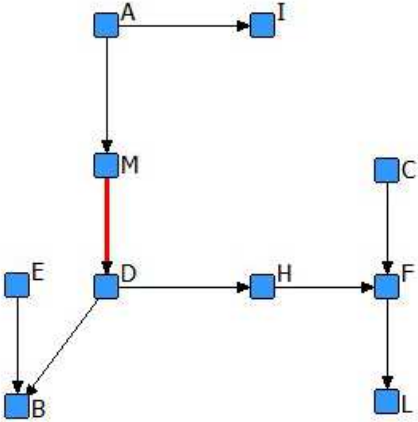}
\end{center}
\caption{Classic necessary preference relation obtained for the second piece of preference information\label{Nor_SPI}}
\end{figure}

\begin{figure}[ht!]
\begin{center}
\includegraphics[scale=0.6]{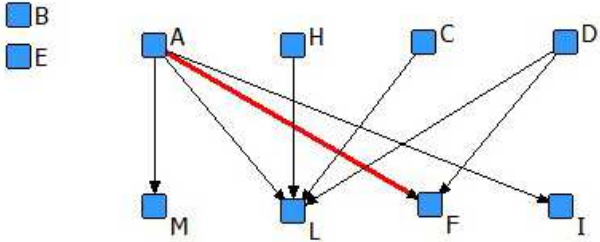}
\end{center}
\caption{Strong necessary preference relation obtained for the second piece of preference information\label{Strong_SPI}}
\end{figure}

\item Finally, the Dean expresses an indifference relation: ``Student $\mathbf{C}$ and student $\mathbf{M}$ are indifferent;'' adding the constraint $U(\mathbf{C})=U(\mathbf{M})$ to the linear programming constraints constituted by $\left(E^{A^{R'}}\right), C_{1}$ and $C_{2}$, and solving the corresponding optimization problems, we obtain Figure \ref{Last_pref}.

\begin{figure}[ht!]
\begin{center}
\includegraphics[scale=0.6]{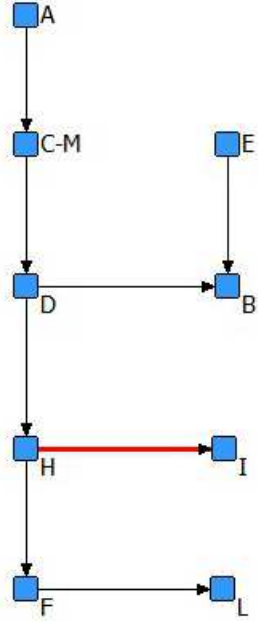}
\end{center}
\caption{Classic necessary preference relation obtained for the third piece of preference information\label{Last_pref}}
\end{figure}
\end{itemize}

At this stage, looking at the classic necessary preference relation, the Dean could conclude that, given his preferences, the best candidates for the scholarship are students $\mathbf{A}$ and $\mathbf{E}$.

\section{Conclusions}
In this paper, we dealt with one of the most important issues of MCDA, that is the imprecise evaluations of alternatives. The possible sources of this imprecision are, for example, lack of data, imprecise measurement or intangible criteria. Many authors have studied different types of imprecision regarding weights of criteria, utility functions or probabilities about the different states of the world. In our approach, we are considering alternatives having imprecise evaluations on particular criteria, i.e., regarding criterion $g_j$, alternative $a$ has an evaluation in the $n$-point interval $\left[g_j^{1}(a),\ldots,g_j^{n}(a)\right]$, where $g_{j}^{i}$ is called indicator. In this way, if $m$ is the number of evaluation criteria,  each alternative is represented by $n\times m$ values corresponding to the different evaluations obtained by the alternative from the above indicators. In order to take into account these imprecise evaluations, first we have considered for all $i,k\in\left\{1,\ldots,n\right\}$ one dominance relation $\Delta^{(i,k)}$ for which $a\Delta^{(i,k)}b$ if the value obtained by $a$ from the $i$-th indicator is at least as good as the value obtained by $b$ from the $k$-th indicator for each criterion $g_j$. In order to compare the alternatives having imprecise evaluations, we have used an additive value function obtained by adding up $n\times m$ marginal value functions, one for each considered indicator. For each alternative $a\in A$ and for each $i\in\left\{1,\ldots,n\right\}$, we have considered a fictitious alternative $a^{(i)}$ having precise evaluations on the considered criteria, equal to the performance of $a$ with respect to each of the $i$-th indicators, i.e. $g_{j}^{k}(a^{(i)})=g_{j}^{i}(a),$ $k=1,\ldots,n,$ and $j=1,\ldots,m$. Using this information, in the ROR context, we have built two binary preference relations, one necessary and one possible, for all $i,k\in\left\{1,\ldots,n\right\}$:
\begin{itemize}
\item alternative $a$ is $(i,k)$-possibly preferred to alternative $b$ if $a^{(i)}$ is at least as good as $b^{(k)}$ for at least one compatible value function,
\item alternative $a$ is $(i,k)$-necessarily preferred to alternative $b$ if $a^{(i)}$ is at least as good as $b^{(k)}$ for all compatible value functions. 
\end{itemize}
We have also presented a generalization of these preference relations in case of group decisions, showing several properties that hold in this particular case. When presenting the methodology for dealing with imprecise evaluations, we have supposed that criteria are characterized by $n$-intervals, where, for the sake of simplicity, $n$ is equal for all criteria, but in the section devoted to generalizations, we also explained how to proceed in case $n$ depends on the considered criteria, and in case of missing evaluations.\\
Remark, finally, that $n$-point imprecise evaluations can be considered as a particular case of the hierarchy of criteria, which has been studied in \cite{CGSHierarchy}. In fact, the evaluation of alternative $a$ on criterion $g_{j}$ depends on the evaluations of $a$ on the $n$ indicators $g_{j}^{i}$ that could be considered as subcriteria descending from criterion $g_{j}$ and situated at the level below. 

\section*{Acknowledgment}
The third author wishes to acknowledge financial support from the Polish National Science Centre, grant no. N N519 441939.


\section*{Appendix}
\textbf{Proof of Proposition \ref{Dominance Proposition}}

\vspace{0,1cm}
\begin{enumerate}
\item Let $a\in A,$ and $i,k\in\left\{1,\ldots,n\right\}$ such that $i\geq k$; this implies that $g_{j}^{i}(a)\geq g_{j}^{k}(a),\; \forall j=1,\ldots,m,$ and thus  $a\Delta^{(i,k)}a$. Therefore $\Delta^{(i,k)}$ is reflexive. 

\item Let us suppose that $a,b,c\in A$ such that $a\Delta^{(i,k)}b$, $b\Delta^{(i,k)}c$ and $i\leq k$ with $i,k\in\left\{1,\ldots,n\right\}$;  
$$
\left\{
\begin{array}{lll}
a\Delta^{(i,k)}b & \Leftrightarrow & g_{j}^{i}(a)\geq g_{j}^{k}(b), \forall j\in J,\\
\\
b\Delta^{(i,k)}c & \Leftrightarrow & g_{j}^{i}(b)\geq g_{j}^{k}(c), \forall j\in J,\\
\end{array}
\right.
\Rightarrow 
$$
\noindent being $i\leq k$ and combining with above expressions, we have: 
$$
g_{j}^{i}(a)\geq g_{j}^{k}(b)\geq g_{j}^{i}(b)\geq g_{j}^{k}(c), \forall j=1,\ldots,m \Rightarrow g_{j}^{i}(a)\geq g_{j}^{k}(c), \forall j\in J \Leftrightarrow a\Delta^{(i,k)}c.
$$
\noindent Thus $\Delta^{(i,k)}$ is transitive.
\item It follows by points \ref{dom_reflexivity} and \ref{dom_transitivity} of this Proposition.
\item Let be $a,b\in A$ and $i,k,r,s\in\left\{1,\ldots,n\right\}$ such that $a\Delta^{(i,k)}b$, $r\geq i$ and $k\geq s$. Then 
$$
\left\{
\begin{array}{lll}
a\Delta^{(i,k)}b & \Leftrightarrow & g_{j}^{i}(a)\geq g_{j}^{k}(b), \forall j\in J,\\
\\
r\geq i & \Leftrightarrow & g_{j}^{r}(a)\geq g_{j}^{i}(a), \forall j\in J, \\
\\
k\geq s & \Leftrightarrow &  g_{j}^{k}(b)\geq g_{j}^{s}(b), \forall j\in J. 
\end{array}
\right.
$$
\noindent From this it follows that:
$$
g_{j}^{r}(a)\geq g_{j}^{i}(a)\geq g_{j}^{k}(b)\geq g_{j}^{s}(b), \;\forall j\in J\Rightarrow g_{j}^{r}(a)\geq g_{j}^{s}(b),\;\forall j\in J\Leftrightarrow a{\Delta}^{(r,s)}b.
$$
\item Let be $a,b,c\in A$, and $i,k,i_1,k_1\in\left\{1,\ldots,n\right\}$ such that $a\Delta^{(i,k)}b$, $b\Delta^{(i_1,k_1)}c$ and $k\geq i_1$. Then we have:
$$
\left\{
\begin{array}{lll}
a\Delta^{(i,k)}b & \Leftrightarrow & g_{j}^{i}(a)\geq g_{j}^{k}(b), \forall j\in J,\\
\\
b\Delta^{(i_1,k_1)}c & \Leftrightarrow & g_{j}^{i_1}(b)\geq g_{j}^{k_1}(c), \forall j\in J,\\
\\
k\geq i_1 & \Leftrightarrow & g_{j}^{k}(b)\geq g_{j}^{i_1}(b), \forall j\in J.
\end{array}
\right.
$$
\noindent From this it follows that:
$$
g_{j}^{i}(a)\geq g_{j}^{k}(b)\geq g_{j}^{i_1}(b)\geq g_{j}^{k_1}(c),\; \forall j\in J\Rightarrow g_{j}^{i}(a)\geq g_{j}^{k_1}(c),\;\forall j\in J \Leftrightarrow a\Delta^{(i,k_1)}c.
$$ By point \ref{Dom_incl} of Proposition \ref{Dominance Proposition}, if $r,s\in\left\{1,\ldots,n\right\}$ such that $r\geq i$ and $s\leq k_1$ then $a\Delta^{(r,s)}c.$ 
\item We have said that $\Delta=\displaystyle\cap_{i=1}^{n}\Delta^{(i,i)}$; being $\Delta^{(i,i)}$ a partial preorder for point \ref{part_preorder} of this Proposition, and being the intersection of partial preorders a partial preoder, then $\Delta$ is a partial preorder.
\item Let be $a,b,c\in A$, and $i,k\in\left\{1,\ldots,n\right\}$, such that $a\Delta^{(i,k)}b$, $b\Delta c$, $s\geq i$ and $k\geq t$. Then we have:
$$
\left\{
\begin{array}{lll}
a\Delta^{(i,k)}b & \Leftrightarrow & g_{j}^{i}(a)\geq g_{j}^{k}(b), \forall j\in J,\\
\\
b\Delta c & \Leftrightarrow & g_{j}^{r}(b)\geq g_{j}^{r}(c), \forall j\in J,\forall r=1,\ldots,n,\\
\\
s\geq i & \Leftrightarrow & g_{j}^{s}(a)\geq g_{j}^{i}(a), \forall j\in J,\\
\\
k\geq t & \Leftrightarrow & g_{j}^{k}(c)\geq g_{j}^{t}(c), \forall j\in J,\\
\end{array}
\right.
$$
\noindent From this it follows that:
$$
g_{j}^{s}(a)\geq g_{j}^{i}(a)\geq g_{j}^{k}(b)\geq g_{j}^{k}(c)\geq g_{j}^{t}(c),\; \forall j\in J\Rightarrow g_{j}^{s}(a)\geq g_{j}^{t}(c),\;\forall j\in J \Leftrightarrow a\Delta^{(s,t)}c.
$$
\item Let be $a,b,c\in A$, and $i,k\in\left\{1,\ldots,n\right\}$, such that $a\Delta b$, $b\Delta^{(i,k)} c$, $s\geq i$ and $k\geq t$. Then we have:
$$
\left\{
\begin{array}{lll}
a\Delta b & \Leftrightarrow & g_{j}^{r}(a)\geq g_{j}^{r}(b), \forall j\in J,\forall r=1,\ldots,n,\\
\\
b\Delta^{(i,k)}c & \Leftrightarrow & g_{j}^{i}(b)\geq g_{j}^{k}(c), \forall j\in J,\\
\\
s\geq i & \Leftrightarrow & g_{j}^{s}(a)\geq g_{j}^{i}(a), \forall j\in J,\\
\\
k\geq t & \Leftrightarrow & g_{j}^{k}(c)\geq g_{j}^{t}(c), \forall j\in J,\\
\end{array}
\right.
$$
\noindent From this it follows that:
$$
g_{j}^{s}(a)\geq g_{j}^{i}(a)\geq g_{j}^{i}(b)\geq g_{j}^{k}(c)\geq g_{j}^{t}(c),\; \forall j\in J\Rightarrow g_{j}^{s}(a)\geq g_{j}^{t}(c),\;\forall j\in J \Leftrightarrow a\Delta^{(s,t)}c.
$$
\end{enumerate}

\vspace{0,2cm}
\noindent\textbf{Proof of Proposition \ref{salvo_prop}}

\vspace{0,1cm}
\begin{enumerate}
\item For all $i\in\left\{1,\ldots,n\right\}$, by point \ref{Dom_incl} of Proposition \ref{Dominance Proposition}, we have $\Delta^{(1,n)}\;\subseteq\;\Delta^{(i,i)}$ because, $i\geq 1$ and $i\leq n$; from this follows that $\Delta^{(1,n)}\;\subseteq\;\cap_{i=1}^{n}\Delta^{(i,i)}=\Delta$ and this proves the first part of the proposition. \\
For the same reason, $\forall i\in\left\{1,\ldots,n\right\}$ we have $\Delta^{(i,i)}\;\subseteq\;\Delta^{(n,1)}$ because $n\geq i$ and $1\leq i$. From this it follows that $\Delta=\cap_{i=1}^{n}\Delta^{(i,i)}\;\subseteq\;\Delta^{(n,1)}$; this proves the second part of the proposition.
\item For all $i,k\in\left\{1,\ldots,n\right\}$, by point \ref{Dom_incl} of Proposition \ref{Dominance Proposition}, being $1\leq i$ and $k\leq n$ we have $\Delta^{(1,n)}\subseteq \Delta^{(i,k)}$ and $\Delta^{(i,k)}\subseteq\Delta^{(n,1)}.$ In this way we obtain the thesis.
\end{enumerate}

\vspace{0,2cm}
\noindent\textbf{Proof of Proposition \ref{Prop_relations}}

\vspace{0,1cm}
\begin{enumerate}
\item Let $a\in A$ and $i,k\in\left\{1,\ldots,n\right\}$. For definition, fictitious alternatives $a^{(i)}$ and $a^{(k)}$ are such that $\forall j\in J$, and $\forall r\in\left\{1,\ldots,n\right\},$ $g_{j}^{r}\left(a^{(i)}\right)=g_{j}^{i}(a)$, and $g_{j}^{r}\left(a^{(k)}\right)=g_{j}^{k}(a)$. Being $i\geq k$, $\forall r\in\left\{1,\ldots,n\right\}$ and $\forall j\in J$, $g_{j}^{r}(a^{(i)})\geq g_{j}^{r}(a^{(k)})$, and using the monotonicity of marginal value functions $u_{j,r}(\cdot)$ we obtain $\forall r\in\left\{1,\ldots,n\right\}$, $\forall j\in J$ $u_{j,r}(g_{j}^{r}(a^{(i)}))\geq u_{j,r}(g_{j}^{r}(a^{(k)}))$; adding up with respect to $j$ and $r$ we obtain the thesis.
\item We have seen that: 
$$
U(a^{(1)})=\sum_{j=1}^{m}\left[\sum_{i=1}^{n}u_{j,i}(g_{j}^{1}(a))\right],\;\;\;U(a)=\sum_{j=1}^{m}\left[\sum_{i=1}^{n}u_{j,i}(g_{j}^{i}(a))\right],\;\;\; U(a^{(n)})=\sum_{j=1}^{m}\left[\sum_{i=1}^{n}u_{j,i}(g_{j}^{n}(a))\right].
$$
\noindent $\forall j\in J,$ and $\forall i\in\left\{1,\ldots,n\right\}$, being $g_{j}^{1}(a)\leq g_{j}^{i}(a)\leq g_{j}^{n}(a)$ and by monotonicity of marginal value functions $u_{j,i}(\cdot)$, we obtain:
$$
u_{j,i}(g_{j}^{1}(a))\leq u_{j,i}(g_{j}^{i}(a))\leq u_{j,i}(g_{j}^{n}(a))
$$
\noindent and therefore adding up with respect to $j$ and $i$ we obtain the thesis, that is 
$$U(a^{(1)})\leq U(a)\leq U(a^{(n)}).$$
\item See \cite{greco2008ordinal}.
\item $\forall a,b\in A$, $\forall i,k\in\left\{1,\ldots,n\right\}$, if $a^{(i)}$ is at least as good as $b^{(k)}$ for all compatible value functions $(a\succsim_{(i,k)}^{N}b)$, then there exists at least one compatible value function for which $a^{(i)}$ is at least as good as $b^{(k)}$ $(a\succsim_{(i,k)}^{P}b)$.
\item See \cite{greco2008ordinal}.
\item See \cite{greco2008ordinal}.
\item See \cite{greco2008ordinal}.
\item Let $a,b\in A$, such that $a\Delta b$. This implies that $g_{j}^{i}(a)\geq g_{j}^{i}(b), \forall j\in J, \forall i=1,\ldots,n.$ We know that, for all $U\in{\cal U}$: 
$$
U(a)=\sum_{j=1}^{m}\left[\sum_{i=1}^{n}u_{j,i}(g_{j}^{i}(a))\right].
$$
\noindent By monotonicity of marginal value functions $u_{j,i}(\cdot)$, we have that $\forall j\in J,$ and $\forall i=1,\ldots,n,$ $u_{j,i}(g_{j}^{i}(a))\geq u_{j,i}(g_{j}^{i}(b))$ and adding up with respect to indices $j$ and $i$ we obtain $U(a)\geq U(b)$ for all compatible value functions, then we obtain the thesis.
\item See \cite{greco2008ordinal}.
\item See \cite{greco2008ordinal}.
\item It follows by point \ref{inclusion_dominance_nec} of this Proposition, and by transitivity of $\succsim^{N}.$
\item It follows by point \ref{inclusion_dominance_nec} of this Proposition, and by transitivity of $\succsim^{N}.$
\item It follows by points \ref{inclusion_dominance_nec} and \ref{necc_pos} of this Proposition.
\item It follows by points \ref{inclusion_dominance_nec} and \ref{pos_necc} of this Proposition.
\item Let $a,b\in A$, and $i,k\in\left\{1,\ldots,n\right\},$ such that $a\Delta^{(i,k)}b$. This implies that $g_{j}^{i}(a)\geq g_{j}^{k}(b), \forall j\in J.$ We know that, for all $U\in{\cal U}$: 
$$
U(a^{(i)})=\sum_{j=1}^{m}\left[\sum_{r=1}^{n}u_{j,r}(g_{j}^{i}(a))\right],\;\;\;U(b^{(k)})=\sum_{j=1}^{m}\left[\sum_{r=1}^{n}u_{j,r}(g_{j}^{k}(b))\right].
$$
\noindent By the monotonicity of marginal value functions $u_{j,i}(\cdot)$ we have that $\forall j\in J,$ and $\forall i=1,\ldots,n,$ $u_{j,r}(g_{j}^{i}(a))\geq u_{j,r}(g_{j}^{k}(b))$ and adding up with respect to indices $j$ and $r$ we obtain the thesis.
\item Let $a,b,c\in A$ and $i,r\in\left\{1,\ldots,n\right\}$ such that $a\succsim^{N}_{(i,n)}b$, $b\succsim^{N}c$ and $r\geq i$. Then we have:
$$
\left\{
\begin{array}{lll}
a\succsim^{N}_{(i,n)}b & \Leftrightarrow & U(a^{(i)})\geq U(b^{(n)}), \forall U\in{\cal U},\\
\\
b\succsim^{N}c & \Leftrightarrow & U(b)\geq U(c), \forall U\in{\cal U},\\
\\
r\geq i & \Rightarrow & U(a^{(r)})\geq U(a^{(i)}), \forall U\in{\cal U}.
\end{array}
\right.
$$
\noindent It follows that, for all $U\in{\cal U}$, $U(a^{(r)})\geq U(a^{(i)})\geq U(b^{(n)})\geq U(b)\geq U(c)\geq U(c^{(1)})$ where $U(b^{(n)})\geq U(b)$ and $U(c)\geq U(c^{(1)})$ hold by point \ref{chain} of this Proposition. Thus, for all $U\in{\cal U}$ we obtain $U(a^{(r)})\geq U(c^{(1)})$, and therefore $a\succsim_{(r,1)}^{N}c.$ \\
\item Let $a,b,c\in A$ and $k,r\in\left\{1,\ldots,n\right\}$ such that $a\succsim^{N}b$, $b\succsim_{(1,k)}^{N}c$ and $r\leq k$. Then we have:
$$
\left\{
\begin{array}{lll}
a\succsim^{N}b & \Leftrightarrow & U(a)\geq U(b), \forall U\in{\cal U},\\
\\
b\succsim_{(1,k)}^{N}c & \Leftrightarrow & U(b^{(1)})\geq U(c^{(k)}), \forall U\in{\cal U},\\
\\
r\leq k & \Rightarrow & U(c^{(r)})\leq U(c^{(k)}), \forall U\in{\cal U}.
\end{array}
\right.
$$
\noindent It follows that, for all $U\in{\cal U}$, $U(a^{(n)})\geq U(a)\geq U(b)\geq U(b^{(1)})\geq U(c^{(k)})\geq U(c^{(r)})$ where $U(a^{(n)})\geq U(a)$ and $U(b)\geq U(b^{(1)})$ hold by point \ref{chain} of this Proposition. Thus, for all $U\in{\cal U}$ we obtain $U(a^{(n)})\geq U(c^{(r)})$, and therefore $a\succsim_{(n,r)}^{N}c.$
\item Let $a,b,c\in A$ and $i,r\in\left\{1,\ldots,n\right\}$ such that $a\succsim_{(i,n)}^{P}b$, $b\succsim^{N}c$ and $r\geq i$. Then we have:
$$
\left\{
\begin{array}{lll}
a\succsim_{(i,n)}^{P}b & \Leftrightarrow & \exists U\in{\cal U}: U(a^{(i)})\geq U(b^{(n)}),\\
\\
b\succsim^{N}c & \Leftrightarrow & U(b)\geq U(c), \forall U\in{\cal U},\\
\\
r\geq i & \Rightarrow & U(a^{(r)})\geq U(a^{(i)}), \forall U\in{\cal U}.
\end{array}
\right.
$$
\noindent It follows that, there exists $U\in{\cal U}$ such that $U(a^{(r)})\geq U(a^{(i)})\geq U(b^{(n)})\geq U(b)\geq U(c)\geq U(c^{(1)})$ where $U(b^{(n)})\geq U(b)$ and $U(c)\geq U(c^{(1)})$ hold by point \ref{chain} of this Proposition. Thus, there exists $U\in{\cal U}$ such that $U(a^{(r)})\geq U(c^{(1)})$, and therefore $a\succsim_{(r,1)}^{P}c.$
\item Let $a,b,c\in A$ and $k,r\in\left\{1,\ldots,n\right\}$ such that $a\succsim^{N}b$, $b\succsim_{(1,k)}^{P}c$ and $r\leq k$. Then we have:
$$
\left\{
\begin{array}{lll}
a\succsim^{N}b & \Leftrightarrow & U(a)\geq U(b), \forall U\in{\cal U},\\
\\
b\succsim_{(1,k)}^{P}c & \Leftrightarrow & \exists U\in{\cal U}: U(b^{(1)})\geq U(c^{(k)}),\\
\\
r\leq k & \Rightarrow & U(c^{(r)})\leq U(c^{(k)}), \forall U\in{\cal U}.
\end{array}
\right.
$$
\noindent It follows that, there exists $U\in{\cal U}$ such that $U(a^{(n)})\geq U(a)\geq U(b)\geq U(b^{(1)})\geq U(c^{(k)})\geq U(c^{(r)})$ where $U(a^{(n)})\geq U(a)$ and $U(b)\geq U(b^{(1)})$ hold by point \ref{chain} of this Proposition. Thus, there exists $U\in{\cal U}$ such that $U(a^{(n)})\geq U(c^{(r)})$, and therefore $a\succsim_{(n,r)}^{P}c.$
\item Let $a,b,c\in A$ and $i,r\in\left\{1,\ldots,n\right\}$ such that $a\succsim_{(i,n)}^{N}b$, $b\succsim^{P}c$ and $r\geq i$. Then we have:
$$
\left\{
\begin{array}{lll}
a\succsim_{(i,n)}^{N}b & \Leftrightarrow & U(a^{(i)})\geq U(b^{(n)}), \forall U\in{\cal U},\\
\\
b\succsim^{P}c & \Leftrightarrow & \exists U\in{\cal U}: U(b)\geq U(c),\\
\\
r\geq i & \Rightarrow & U(a^{(r)})\geq U(a^{(i)}), \forall U\in{\cal U}.
\end{array}
\right.
$$
\noindent It follows that, there exists $U\in{\cal U}$ such that $U(a^{(r)})\geq U(a^{(i)})\geq U(b^{(n)})\geq U(b)\geq U(c)\geq U(c^{(1)})$ where $U(b^{(n)})\geq U(b)$ and $U(c)\geq U(c^{(1)})$ hold by point \ref{chain} of this Proposition. Thus, there exists $U\in{\cal U}$ such that $U(a^{(r)})\geq U(c^{(1)})$, and therefore $a\succsim_{(r,1)}^{P}c.$
\item Let $a,b,c\in A$ and $k,r\in\left\{1,\ldots,n\right\}$ such that $a\succsim^{P}b$, $b\succsim_{(1,k)}^{N}c$ and $r\leq k$. Then we have:
$$
\left\{
\begin{array}{lll}
a\succsim^{P}b & \Leftrightarrow & \exists U\in{\cal U}: U(a)\geq U(b),\\
\\
b\succsim_{(1,k)}^{N}c & \Leftrightarrow & U(b^{(1)})\geq U(c^{(k)}), \forall U\in{\cal U},\\
\\
r\leq k & \Rightarrow & U(c^{(r)})\leq U(c^{(k)}), \forall U\in{\cal U}.
\end{array}
\right.
$$
\noindent It follows that, there exists $U\in{\cal U}$ such that $U(a^{(n)})\geq U(a)\geq U(b)\geq U(b^{(1)})\geq U(c^{(k)})\geq U(c^{(r)})$ where $U(a^{(n)})\geq U(a)$ and $U(b)\geq U(b^{(1)})$ hold by point \ref{chain} of this Proposition. Thus, there exists $U\in{\cal U}$ such that $U(a^{(n)})\geq U(c^{(r)})$, and therefore $a\succsim_{(n,r)}^{P}c.$
\item It follows from points \ref{inclusion_dominance} and \ref{nec_ind_nec} of this Proposition.
\item It follows from points \ref{inclusion_dominance} and \ref{nec_nec_ind} of this Proposition.
\item It follows from points \ref{inclusion_dominance} and \ref{nec_ind_pos} of this Proposition.
\item It follows from points \ref{inclusion_dominance} and \ref{pos_nec_ind} of this Proposition.
\item It follows from points \ref{inclusion_dominance_nec} and \ref{nec_nec_ind} of this Proposition.
\item It follows from points \ref{inclusion_dominance_nec} and \ref{nec_ind_nec} of this Proposition.
\item It follows from points \ref{inclusion_dominance_nec} and \ref{nec_pos_ind} of this Proposition.
\item It follows from points \ref{inclusion_dominance_nec} and \ref{pos_ind_nec} of this Proposition.
\item It follows by point \ref{monotonicity} of this Proposition.
\item Let $a,b,c\in A$ and $i,k\in\left\{1,\ldots,n\right\}$, such that $a\succsim^{N}_{(i,k)}b,$ $b\succsim^{N}_{(i,k)}c$ and $i\leq k$. Then we have:
$$
\left\{
\begin{array}{lll}
a\succsim^{N}_{(i,k)}b & \Leftrightarrow & U(a^{(i)})\geq U(b^{(k)}), \forall U\in{\cal U},\\
\\
b\succsim^{N}_{(i,k)}c & \Leftrightarrow & U(b^{(i)})\geq U(c^{(k)}), \forall U\in{\cal U},\\
\end{array}
\right.
$$
\noindent By point \ref{monotonicity} of this Proposition, $U(a^{(i)})\geq U(b^{(k)})\geq U(b^{(i)})\geq U(c^{(k)})$, $\forall U\in{\cal U}$; thus $U(a^{(i)})\geq U(c^{(k)})$, $\forall U\in{\cal U}$ and therefore $a\succsim_{(i,k)}^{N}c.$
\item It follows by points \ref{reflexivity} and \ref{transitivity} of this Proposition because $i\geq i$ implies $\succsim_{(i,i)}^{N}$ is reflexive, and $i\leq i$ implies $\succsim_{(i,i)}^{N}$ is transitive.
\item Let $a,b\in A$, and $i,k\in\left\{1,\ldots,n\right\}$, such that $a\not\succsim_{(i,k)}^{N}b.$ This means that $\exists U\in{\cal U}: U(a^{(i)})<U(b^{(k)}).$ Therefore, $b\succsim_{(k,i)}^{P}(a).$
\item Let $a,b\in A$, and $i,k\in\left\{1,\ldots,n\right\}$ with $i\geq k$ such that $a\not\succsim_{(i,k)}^{P}b$. This means that for all $U\in{\cal U}$, $U(b^{(k)})>U(a^{(i)})$. Being $i\geq k$, and by point \ref{monotonicity} of this Proposition, we obtain that for all $U\in{\cal U},$ $U(b^{(i)})\geq U(b^{(k)})>U(a^{(i)})\geq U(a^{(k)})$; thus for all $U\in{\cal U}$, $U(b^{(i)})> U(a^{(k)})$, therefore $b\succsim_{(i,k)}^{N}a$ implying $b\succsim_{(i,k)}^{P}a$ by point \ref{inclus_nec_pos_n_point} of this Proposition. In this way $\succsim_{(i,k)}^{P}$ is strongly complete.\\
Let $a,b,c\in A$, $i,k\in\left\{1,\ldots,n\right\}$, such that $i\geq k$, $a\not\succsim_{(i,k)}^{P}b$ and $b\not\succsim_{(i,k)}^{P}c.$ Then we have:
$$
\left\{
\begin{array}{lll}
a\not\succsim_{(i,k)}^{P}b & \Leftrightarrow & U(a^{(i)})<U(b^{(k)}), \forall U\in{\cal U},\\
\\
b\not\succsim_{(i,k)}^{P}c & \Leftrightarrow & U(b^{(i)})<U(c^{(k)}), \forall U\in{\cal U},\\
\end{array}
\right.
$$
\noindent From this and using point \ref{monotonicity} of this Proposition it follows that 
$$U(a^{(i)})<U(b^{(k)})\leq U(b^{(i)})<U(c^{(k)}),\; \forall U\in{\cal U}\Rightarrow U(a^{(i)})< U(c^{(k)}), \;\forall U\in{\cal U}\Leftrightarrow a\not\succsim_{(i,k)}^{P}c.
$$ This proves that $\succsim_{(i,k)}^{P}$ is negatively transitive.
\item Let $a,b\in A$, and $i,k,i_1,k_1\in\left\{1,\ldots,n\right\}$, such that $i_1\geq i$, $k_1\leq k$ and $a\succsim_{(i,k)}^{N}b$. Then, by point \ref{monotonicity} of this Proposition, we have:
$$
\left\{
\begin{array}{lll}
a\succsim_{(i,k)}^{N}b & \Leftrightarrow & U(a^{(i)})\geq U(b^{(k)}), \forall U\in{\cal U},\\
\\
i_1\geq i & \Rightarrow & U(a^{(i_1)})\geq U(a^{(i)}), \forall U\in{\cal U},\\
\\
k_1\leq k & \Rightarrow & U(b^{(k_1)})\leq U(b^{(k)}), \forall U\in{\cal U}.\\
\end{array}
\right.
$$
\noindent Thus: 
$$
U(a^{(i_1)})\geq U(a^{(i)})\geq U(b^{(k)})\geq U(b^{(k_1)}), \forall U\in{\cal U}\Rightarrow U(a^{(i_1)})\geq U(b^{(k_1)}), \forall U\in{\cal U}\Leftrightarrow a\succsim_{(i_1,k_1)}^{N}b.
$$
\item Let $a,b\in A$, and $i,k,i_1,k_1\in\left\{1,\ldots,n\right\}$ such that $i_1\geq i$, $k_1\leq k$ and $a\succsim_{(i,k)}^{P}b$. Then, by point \ref{monotonicity} of this Proposition, we have:
$$
\left\{
\begin{array}{lll}
a\succsim_{(i,k)}^{P}b & \Leftrightarrow & \exists U\in{\cal U}: U(a^{(i)})\geq U(b^{(k)}),\\
\\
i_1\geq i & \Rightarrow & U(a^{(i_1)})\geq U(a^{(i)}), \forall U\in{\cal U},\\
\\
k_1\leq k & \Rightarrow & U(b^{(k_1)})\leq U(b^{(k)}), \forall U\in{\cal U}.\\
\end{array}
\right.
$$
\noindent Thus: 
$$
\exists U\in{\cal U}: U(a^{(i_1)})\geq U(a^{(i)})\geq U(b^{(k)})\geq U(b^{(k_1)})\Rightarrow \exists U\in{\cal U}: U(a^{(i_1)})\geq U(b^{(k_1)})\Leftrightarrow a\succsim_{(i_1,k_1)}^{P}b.
$$
\item For all $i,k=1,\ldots,n$, it is true that $i\geq 1$ and $k\leq n$; thus by point \ref{nec_inclusion} of this Proposition, we have $\succsim_{(1,n)}^{N}\;\subseteq\;\succsim_{(i,k)}^{N}$; at the same time, being $n\geq i$ and $1\leq k$, by point \ref{nec_inclusion} of this Proposition, we have $\succsim_{(i,k)}^{N}\;\subseteq\;\succsim_{(n,1)}^{N}.$ In this way we obtain the thesis, that is $\succsim_{(1,n)}^{N}\;\subseteq\;\succsim_{(i,k)}^{N}\;\subseteq\;\succsim_{(n,1)}^{N}.$
\item For all $i,k=1,\ldots,n$, it is true that $i\geq 1$ and $k\leq n$; thus by point \ref{pos_inclusion} of this Proposition, we have $\succsim_{(1,n)}^{P}\;\subseteq\;\succsim_{(i,k)}^{P}$; at the same time, being $n\geq i$ and $1\leq k$, by point \ref{pos_inclusion} of this Proposition, we have $\succsim_{(i,k)}^{P}\;\subseteq\;\succsim_{(n,1)}^{P}.$ In this way we obtain the thesis, that is $\succsim_{(1,n)}^{P}\;\subseteq\;\succsim_{(i,k)}^{P}\;\subseteq\;\succsim_{(n,1)}^{P}.$
\item Let $a,b\in A$ such that $a\succsim_{(1,n)}^{N}b.$ This means that $U(a^{(1)})\geq U(b^{(n)}),$ $\forall U\in{\cal U}$. By point \ref{chain} of this Proposition, we have that $U(a)\geq U(a^{(1)})\geq U(b^{(n)})\geq U(b),$ $\forall U\in{\cal U}$, and thus we obtain $U(a)\geq U(b)$, $\forall U\in{\cal U},$ that is $a\succsim^{N}b.$ In this way we proved that $\succsim_{(1,n)}^{N}\;\subseteq\;\succsim^{N}$. \\
Analogously, $a\succsim^{N}b$ means that $U(a)\geq U(b),$ $\forall U\in{\cal U}$; by point \ref{chain} of this Proposition we obtain $U(a^{(n)})\geq U(a)\geq U(b)\geq U(b^{(1)})$, $\forall U\in{\cal U}$, and thus we have $U(a^{(n)})\geq U(b^{(1)})$, $\forall U\in{\cal U},$ that is $a\succsim_{(n,1)}^{N}b.$ In this way we proved that $\succsim^{N}\;\subseteq\;\succsim_{(n,1)}^{N}.$ 
\item Let $a,b\in A$ such that $a\succsim_{(1,n)}^{P}b.$ This means that $\exists U\in{\cal U}: U(a^{(1)})\geq U(b^{(n)})$. By point \ref{chain} of this Proposition we have:
$$
U(a)\geq U(a^{(1)})\geq U(b^{(n)})\geq U(b)\Rightarrow U(a)\geq U(b) \Leftrightarrow a\succsim^{P}b.
$$ In this way we proved that $\succsim_{(1,n)}^{P}\;\subseteq\;\succsim^{P}$. \\
Analogously, $a\succsim^{P}b$ means that $\exists U\in{\cal U}: U(a)\geq U(b)$; by point \ref{chain}. of this Proposition we obtain: 
$$
U(a^{(n)})\geq U(a)\geq U(b)\geq U(b^{(1)})\Rightarrow U(a^{(n)})\geq U(b^{(1)})\Leftrightarrow a\succsim_{(n,1)}^{P}b.
$$
In this way we proved that $\succsim^{P}\;\subseteq\;\succsim_{(n,1)}^{P}.$
\item Let $a,b,c\in A$ and $i,k,i_1,k_1\in\left\{1,\ldots,n\right\}$, such that $a\succsim_{(i,k)}^{N}b$, $b\succsim_{(i_1,k_1)}^{N}c$ and $k\geq i_1$. Then we have:
$$
\left\{
\begin{array}{lll}
a\succsim_{(i,k)}^{N}b & \Leftrightarrow & U(a^{(i)})\geq U(b^{(k)}), \forall U\in{\cal U},\\
\\
b\succsim_{(i_1,k_1)}^{N}c & \Leftrightarrow & U(b^{(i_1)})\geq U(c^{(k_1)}), \forall U\in{\cal U},\\
\\
k\geq i_1 & \Rightarrow & U(b^{(k)})\geq U(b^{(i_1)})
\end{array}
\right.
$$
\noindent From this it follows:
$$
U(a^{(i)})\geq U(b^{(k)})\geq U(b^{(i_1)})\geq U(c^{(k_1)}),\forall U\in{\cal U}\Rightarrow U(a^{(i)})\geq U(c^{(k_1)}), \forall U\in{\cal U}\Leftrightarrow a\succsim_{(i,k_1)}^{N}c.
$$ 
Being $a\succsim_{(i,k_1)}^{N}c$ and $r,s\in\left\{1,\ldots,n\right\}$: $r\geq i$ and $s\leq k_1$, by point \ref{nec_inclusion} of this Proposition we obtain $a\succsim_{(r,s)}^{N}c.$ 
\item Let $a,b,c\in A,$ and $i,k,i_1,k_1\in\left\{1,\ldots,n\right\}$, such that $a\succsim_{(i,k)}^{N}b,$ $b\succsim_{(i_1,k_1)}^{P}c,$ and $k\geq i_1.$ Then we have:
$$
\left\{
\begin{array}{lll}
a\succsim_{(i,k)}^{N}b & \Leftrightarrow & U(a^{(i)})\geq U(b^{(k)}), \forall U\in{\cal U},\\
\\
b\succsim_{(i_1,k_1)}^{P}c & \Leftrightarrow & \exists U\in{\cal U}: U(b^{(i_1)})\geq U(c^{(k_1)}),\\
\\
k\geq i_1 & \Rightarrow & U(b^{(k)})\geq U(b^{(i_1)}), \forall U\in{\cal U}.
\end{array}
\right.
$$
\noindent It follows that:
$$
\exists U\in{\cal U}: U(a^{(i)})\geq U(b^{(k)})\geq U(b^{(i_1)})\geq U(c^{(k_1)})\Rightarrow \exists U\in{\cal U}: U(a^{(i)})\geq U(c^{(k_1)})\Leftrightarrow a\succsim_{(i,k_1)}^{P}c.
$$ 
Being $r,s\in\left\{1,\ldots,n\right\}$: $r\geq i$ and $s\leq k_1$, by point \ref{pos_inclusion} of this Proposition we obtain $a\succsim_{(r,s)}^{P}c.$ 
\item Let $a,b,c\in A,$ $i,k,i_1,k_1\in\left\{1,\ldots,n\right\}$ such that $a\succsim_{(i,k)}^{P}b,$ $b\succsim_{(i_1,k_1)}^{N}c,$ and $k\geq i_1.$ We have that:
$$
\left\{
\begin{array}{lll}
a\succsim_{(i,k)}^{P}b & \Leftrightarrow & \exists U\in{\cal U}: U(a^{(i)})\geq U(b^{(k)}),\\
\\
b\succsim_{(i_1,k_1)}^{N}c & \Leftrightarrow & U(b^{(i_1)})\geq U(c^{(k_1)}), \forall U\in{\cal U},\\
\\
k\geq i_1 & \Rightarrow & U(b^{(k)})\geq U(b^{(i_1)}), \forall U\in{\cal U}. 
\end{array}
\right.
$$
\noindent From this it follows: 
$$
\exists U\in{\cal U}: U(a^{(i)})\geq U(b^{(k)})\geq U(b^{(i_1)})\geq U(c^{(k_1)})\Rightarrow \exists U\in{\cal U}: U(a^{(i)})\geq U(c^{(k_1)})\Leftrightarrow a\succsim_{(i,k_1)}^{P}c.
$$ 
Being $r,s\in\left\{1,\ldots,n\right\}$ such that $r\geq i$ and $s\leq k_1$, by point \ref{pos_inclusion} of this Proposition we obtain $a\succsim_{(r,s)}^{P}c.$ 
\item It follows from points \ref{inclusion_dominance} and \ref{nec_transitivity} of this Proposition.
\item It follows from points \ref{inclusion_dominance} and \ref{nec_transitivity} of this Proposition.
\item It follows from points \ref{inclusion_dominance} and \ref{necpos_transitivity} of this Proposition.
\item It follows from points \ref{inclusion_dominance} and \ref{posnec_transitivity} of this Proposition.
\end{enumerate}

\vspace{0,2cm}
\noindent\textbf{Proof of Proposition \ref{picture_prop}}
\vspace{0,2cm}

\noindent Proof of all points of this Proposition are straightforward consequences of Proposition \ref{Prop_relations} except for the following points:
\begin{description}
\item[1.] Let $a,b,c\in A$ such that $a\succsim^{SN}b$ and $b\succsim^{N}c$; it follows that:
$$
\left\{
\begin{array}{lll}
a\succsim^{SN}b & \Leftrightarrow & U(a^{(1)})\geq U(b^{(n)}), \forall U\in{\cal U},\\
\\
b\succsim^{N}c & \Leftrightarrow & U(b)\geq U(c), \forall U\in{\cal U}.\\
\end{array}
\right.
$$
By point \ref{chain} of Proposition \ref{Prop_relations} we have that for all $U\in{\cal U}$, $U(a)\geq U(a^{(1)})$ and $U(b^{(n)})\geq U(b)$ thus, 
$$U(a)\geq U(a^{(1)})\geq U(b^{(n)})\geq U(b)\geq U(c), \;\forall U\in{\cal U}$$ 
and therefore $U(a)\geq U(c)$, for all $U\in{\cal U}$ obtaining the thesis.
\item[2.] Let $a,b,c\in A$ such that $a\succsim^{SN}b$ and $b\succsim^{P}c$; it follows that:
$$
\left\{
\begin{array}{lll}
a\succsim^{SN}b & \Leftrightarrow & U(a^{(1)})\geq U(b^{(n)}), \forall U\in{\cal U},\\
\\
b\succsim^{P}c & \Leftrightarrow & \exists U\in{\cal U}:  U(b)\geq U(c).\\
\end{array}
\right.
$$
By point \ref{chain} of Proposition \ref{Prop_relations} we have that for all $U\in{\cal U}$, $U(a)\geq U(a^{(1)})$ and $U(b^{(n)})\geq U(b)$ thus, 
$$\exists U\in{\cal U}: U(a)\geq U(a^{(1)})\geq U(b^{(n)})\geq U(b)\geq U(c),$$ 
and therefore there exists $U\in{\cal U}$ such that $U(a)\geq U(c)$, obtaining the thesis.
\item[3.] Let $a,b,c\in A$ such that $a\succsim^{N}b$ and $b\succsim^{SN}c$; it follows that:
$$
\left\{
\begin{array}{lll}
a\succsim^{N}b & \Leftrightarrow & U(a)\geq U(b), \forall U\in{\cal U},\\
\\
b\succsim^{SN}c & \Leftrightarrow & U(b^{(1)})\geq U(c^{(n)}), \forall U\in{\cal U}.\\
\end{array}
\right.
$$
By point \ref{chain} of Proposition \ref{Prop_relations} we have that for all $U\in{\cal U}$, $U(b)\geq U(b^{(1)})$ and $U(c^{(n)})\geq U(c)$ thus, 
$$U(a)\geq U(b)\geq U(b^{(1)})\geq U(c^{(n)})\geq U(c), \forall U\in{\cal U}$$ 
and therefore $U(a)\geq U(c)$ for all $U\in{\cal U}$, obtaining the thesis.
\item[4.] Let $a,b,c\in A$ such that $a\succsim^{N}b$ and $b\succsim^{SP}c$; it follows that:
$$
\left\{
\begin{array}{lll}
a\succsim^{N}b & \Leftrightarrow & U(a)\geq U(b), \forall U\in{\cal U},\\
\\
b\succsim^{SP}c & \Leftrightarrow & \exists U\in{\cal U}: U(b^{(1)})\geq U(c^{(n)}).\\
\end{array}
\right.
$$
By point \ref{chain} of Proposition \ref{Prop_relations} we have that for all $U\in{\cal U}$, $U(b)\geq U(b^{(1)})$ and $U(c^{(n)})\geq U(c)$ thus, 
$$\exists U\in{\cal U}: U(a)\geq U(b)\geq U(b^{(1)})\geq U(c^{(n)})\geq U(c),$$ 
and therefore there exists $U\in{\cal U}$ such that $U(a)\geq U(c)$, obtaining the thesis.
\item[5.] Let $a,b,c\in A$ such that $a\succsim^{SP}b$ and $b\succsim^{N}c$; it follows that:
$$
\left\{
\begin{array}{lll}
a\succsim^{SP}b & \Leftrightarrow & \exists U\in{\cal U}: U(a^{(1)})\geq U(b^{(n)}),\\
\\
b\succsim^{N}c & \Leftrightarrow & U(b)\geq U(c), \forall U\in{\cal U}.\\
\end{array}
\right.
$$
By point \ref{chain} of Proposition \ref{Prop_relations} we have that for all $U\in{\cal U}$, $U(a)\geq U(a^{(1)})$ and $U(b^{(n)})\geq U(b)$ thus, 
$$\exists U\in{\cal U}: U(a)\geq U(a^{(1)})\geq U(b^{(n)})\geq U(b)\geq U(c),$$ 
and therefore there exists $U\in{\cal U}$ such that $U(a)\geq U(c)$, obtaining the thesis.
\item[6.] Let $a,b,c\in A$ such that $a\succsim^{P}b$ and $b\succsim^{SN}c$; it follows that:
$$
\left\{
\begin{array}{lll}
a\succsim^{P}b & \Leftrightarrow & \exists U\in{\cal U}: U(a)\geq U(b),\\
\\
b\succsim^{SN}c & \Leftrightarrow & U(b^{(1)})\geq U(c^{(n)}), \forall U\in{\cal U},\\
\end{array}
\right.
$$
By point \ref{chain} of Proposition \ref{Prop_relations} we have that for all $U\in{\cal U}$, $U(b)\geq U(b^{(1)})$ and $U(c^{(n)})\geq U(c)$ thus, 
$$\exists U\in{\cal U}: U(a)\geq U(b)\geq U(b^{(1)})\geq U(c^{(n)})\geq U(c),$$ 
and therefore there exists $U\in{\cal U}$ such that $U(a)\geq U(c)$, obtaining the thesis.
\item[13.] Let $a,b,c\in A$ such that $a\Delta^{S} b$ and $b\succsim^{N}c$. By point \ref{inclusion_dominance} of Proposition \ref{Prop_relations}, it follows that $a\succsim^{SN}c$. Then:
$$
\left\{
\begin{array}{lll}
a\succsim^{SN}b & \Leftrightarrow & U(a^{(1)})\geq U(b^{(n)}), \forall U\in{\cal U},\\
\\
b\succsim^{N}c & \Leftrightarrow & U(b)\geq U(c), \forall U\in{\cal U},\\
\end{array}
\right.
$$
\noindent thus, for all $U\in{\cal U}$, we obtain $U(a)\geq U(a^{(1)})\geq U(b^{(n)})\geq U(b)\geq U(c)$, where $U(a)\geq U(a^{(1)})$ and $U(b^{(n)})\geq U(b)$ hold by point \ref{chain} of Proposition \ref{Prop_relations}. Therefore $U(a)\geq U(c)$ for all $U\in{\cal U}$ from which the thesis $a\succsim^{N}c.$
\item[16.] Let $a,b,c\in A$ such that $a\Delta^{S} b$ and $b\succsim^{P}c$. By point \ref{inclusion_dominance} of Proposition \ref{Prop_relations}, it follows that $a\succsim^{SN}c$. Then:
$$
\left\{
\begin{array}{lll}
a\succsim^{SN}b & \Leftrightarrow & U(a^{(1)})\geq U(b^{(n)}), \forall U\in{\cal U},\\
\\
b\succsim^{P}c & \Leftrightarrow & \exists U\in{\cal U}: U(b)\geq U(c),\\
\end{array}
\right.
$$
\noindent thus, there exists $U\in{\cal U}: U(a)\geq U(a^{(1)})\geq U(b^{(n)})\geq U(b)\geq U(c)$, where $U(a)\geq U(a^{(1)})$ and $U(b^{(n)})\geq U(b)$ hold by point \ref{chain} of Proposition \ref{Prop_relations}. Therefore there exists $U\in{\cal U}:$ $U(a)\geq U(c)$ from which the thesis $a\succsim^{P}c.$
\item[20.] It follows from point \ref{inclusion_dominance_nec} of Proposition \ref{Prop_relations} and from point \ref{N_SN} of this Proposition.
\item[21.] It follows from point \ref{inclusion_dominance_nec} of Proposition \ref{Prop_relations} and from point \ref{N_SP} of this Proposition.
\item[26.] It follows from point \ref{inclusion_dominance_nec} of Proposition \ref{Prop_relations} and from point \ref{SN_N} of this Proposition.
\item[31.] It follows from point \ref{inclusion_dominance} of Proposition \ref{Prop_relations} and from point \ref{N_SN} of this Proposition.
\item[36.] It follows from point \ref{inclusion_dominance_nec} of Proposition \ref{Prop_relations} and from point \ref{SP_N} of this Proposition.
\item[40.] It follows from point \ref{inclusion_dominance} of Proposition \ref{Prop_relations} and from point \ref{P_SN} of this Proposition.
\end{description}

\vspace{0,2cm}
\noindent\textbf{Proof of Proposition \ref{group_property}}
\vspace{0,2cm}

\noindent Let $a,b\in A$ and $i,k\in\left\{1,\ldots,n\right\}$ such that $a\succsim^{R_1,R_{2}}_{(i,k),{\cal D^{'}}}b$, $i_1\geq i$ and $k_1\leq k$ and consider the following cases:
\begin{itemize}
\item $R_{2}=N$; then $a\succsim^{R_1,R_{2}}_{(i,k),{\cal D^{'}}}b$ means that $a\succsim^{R_1}_{(i,k)}b$ for all DMs. Being $\succsim^{R_{1}}\;\subseteq\;\succsim^{R_{1}^{'}}$, we have $a\succsim^{R_1^{'}}_{(i,k)}b$ for all DMs; being  $i_1\geq i$ and $k_1\leq k$, by point \ref{nec_inclusion} (if $R_{1}=N$) or \ref{pos_inclusion} (if $R_1=P$) of Proposition \ref{Prop_relations} we have $a\succsim^{R_1^{'}}_{(i_1,k_1)}b$ for all DMs (that is $a\succsim^{R_1^{'},R_{2}^{'}}_{(i_1,k_1),{\cal D^{'}}}b$ with $R_{2}^{'}=N$) implying also $a\succsim^{R_1^{'}}_{(i_1,k_1)}b$ for at least one DM (that is $a\succsim^{R_1^{'},R_{2}^{'}}_{(i_1,k_1),{\cal D^{'}}}b$ with $R_{2}^{'}=P.$)
\item $R_{2}=P$; then $a\succsim^{R_1,R_{2}}_{(i,k),{\cal D^{'}}}b$ means that $a\succsim^{R_1}_{(i,k)}b$ for at least one DM, $d_h\in{\cal D^{'}}$. Being $\succsim^{R_{1}}\;\subseteq\;\succsim^{R_{1}^{'}}$, we have $a\succsim^{R_1^{'}}_{(i,k)}b$ for at least one DM $d_h\in{\cal D^{'}}$; being  $i_1\geq i$ and $k_1\leq k$, by point \ref{nec_inclusion} (if $R_{1}=N$) or \ref{pos_inclusion} (if $R_1=P$) of Proposition \ref{Prop_relations} we have $a\succsim^{R_1^{'}}_{(i_1,k_1)}b$ for at least one DM, $d_h\in{\cal D^{'}}$ (that is $a\succsim^{R_1^{'},R_{2}^{'}}_{(i_1,k_1),{\cal D^{'}}}b$ with $R_{2}^{'}=P$).
\end{itemize}

\vspace{0,2cm}
\noindent\textbf{Proof of Proposition \ref{group_normal_index}}
\begin{enumerate}
\item It follows from point \ref{N_incl} of Proposition \ref{Prop_relations};
\item It follows from point \ref{P_incl} of Proposition \ref{Prop_relations}.
\end{enumerate}

\vspace{0,2cm}
\noindent\textbf{Proof of Proposition \ref{Compl_prop}}
\begin{enumerate}
\item Let $a,b\in A$ and $i,k\in\left\{1,\ldots,n\right\}$ such that $a\not\succsim_{(i,k),{\cal D^{'}}}^{N,N}b$. This means that there exists at least one DM, $d_{h}\in{\cal D^{'}}$, such that $a\not\succsim_{(i,k)}^{N}b$; from this, by point \ref{imprec_completeness} of Proposition \ref{Prop_relations}, we have $b\succsim_{(k,i)}^{P}a$ for at least one DM, $d_{h}\in{\cal D^{'}}$, and therefore $b\succsim_{(k,i),{\cal D^{'}}}^{P,P}a.$
\item Let $a,b\in A$ and $i,k\in\left\{1,\ldots,n\right\}$ such that $a\not\succsim_{(i,k),{\cal D^{'}}}^{N,P}b$. This means that for all DMs, $d_{h}\in{\cal D^{'}}$, $a\not\succsim_{(i,k)}^{N}
b$; from this, by point \ref{imprec_completeness} of Proposition \ref{Prop_relations}, we have $b\succsim_{(k,i)}^{P}a$ for all DMs, $d_{h}\in{\cal D^{'}}$, therefore $b\succsim_{(k,i),{\cal D^{'}}}^{P,N}a.$
\end{enumerate}

\vspace{0,2cm}
\noindent\textbf{Proof of Proposition \ref{group_transitivity}}

\vspace{0,2cm}
\noindent Let be $a,b,c\in A$, and $i,k\in\left\{1,\ldots,n\right\}$ such that $a\succsim^{N,R_1}_{(i,k),{\cal D^{'}}}b$ $b\succsim^{N,R_2}_{(i_1,k_1),{\cal D^{'}}}c$ and $k\geq i_1$. We can consider the following cases:
\begin{itemize}
\item If $R_{1}=R_{2}=N$ then $a\succsim^{N,R_1}_{(i,k),{\cal D^{'}}}b$ means that $a\succsim^{N}_{(i,k)}b$ for all DMs $d_{h}\in{\cal D^{'}}$, and $b\succsim^{N,R_2}_{(i_1,k_1),{\cal D^{'}}}c$ means that $b\succsim^{N}_{(i_1,k_1)}c$ for all DMs. By point \ref{nec_transitivity} of Proposition \ref{Prop_relations} we have $a\succsim^{N}_{(r,s)}c$ for all DMs, $d_h\in{\cal D^{'}}$ thus $a\succsim^{N,N}_{(r,s),{\cal D^{'}}}c$. 
\item If $R_{1}=N$ and $R_{2}=P$ then $a\succsim^{N,R_1}_{(i,k),{\cal D^{'}}}b$ means that $a\succsim^{N}_{(i,k)}b$ for all DMs $d_{h}\in{\cal D^{'}}$, and $b\succsim^{N,R_2}_{(i_1,k_1),{\cal D^{'}}}c$ means that $b\succsim^{N}_{(i_1,k_1)}c$ for at least one DM, $d_h\in{\cal D^{'}}$. By point \ref{nec_transitivity} of Proposition \ref{Prop_relations} we have $a\succsim^{N}_{(r,s)}c$ for at least one DM, $d_h\in{\cal D^{'}}$, thus $a\succsim^{N,P}_{(r,s),{\cal D^{'}}}c$. 
\item If $R_{1}=P$ and $R_{2}=N$ then $a\succsim^{N,R_1}_{(i,k),{\cal D^{'}}}b$ means that $a\succsim^{N}_{(i,k)}b$ for at least one DM $d_{h}\in{\cal D^{'}}$, and $b\succsim^{N,R_2}_{(i_1,k_1),{\cal D^{'}}}c$ means that $b\succsim^{N}_{(i_1,k_1)}c$ for all DMs, $d_h\in{\cal D^{'}}$. By point \ref{nec_transitivity} of Proposition \ref{Prop_relations} we have $a\succsim^{N}_{(r,s)}c$ for at least one DM, $d_h\in{\cal D^{'}}$, thus $a\succsim^{N,P}_{(r,s),{\cal D^{'}}}c$. 
\end{itemize}
\noindent The second and the third part of this Proposition can be proved analogously to the first part, using points \ref{necpos_transitivity} and \ref{posnec_transitivity} of Proposition \ref{Prop_relations}, respectively. 

\end{document}